\documentclass[12pt]{article}
\usepackage{mathrsfs}
\usepackage{amsmath,amssymb,amsfonts,latexsym,psfrag,eepic,colordvi}
\usepackage{graphicx, graphics, colordvi, pstricks, pst-node}

\textwidth = 6.0in \headsep = 0.5in

\newtheorem{theo}{Theorem}[section]

\newtheorem{Lemma}{Lemma}[section]

\newenvironment {proof} {\noindent{\em Proof.}}{\hspace*{\fill}$\Box$\par\vspace{4mm}}



\allowdisplaybreaks[4]

\begin{document}

\title{Signless laplacian characteristic polynomials of regular graph transformations}

\author{
  Jianping Li$^{\text a, b}$, Bo Zhou$^{\text b}$\footnote{Corresponding author.}\\
$^{\text a}$Faculty of Applied
Mathematics, Guangdong University of Technology,\\
Guangzhou 510090, P. R. China \\
$^{\text b}$Department of Mathematics,
South China Normal University,\\
Guangzhou 510631, P. R. China\\
 Email: {\tt zhoubo@scnu.edu.cn}
 }
\date{}
\maketitle

\begin{abstract}
Let $G$ be a simple $r$-regular graph with $n$ vertices and $m$
vertices.  We give the signless Laplacian
characteristic polynomials of  $xyz$-transformations $G^{xyz}$
of $G$ in terms of $n$, $m$, $r$ and the  signless Laplacian
spectrum of $G$.\\ \\
{\bf Keywords:} regular graph, $xyz$-transformation, signless
Laplacian  characteristic polynomial
\end{abstract}

\section{ Introduction}

We consider simple graphs.
Let $G=(V, E)$
be a graph with vertex set $V(G)$ and edge set $E(G)$.

Let  $V(G)=\{v_1,\dots, v_n\}$. The adjacency matrix $A(G)$ of $G$ is the
$(0, 1)$-matrix $(a_{ij})$ of order $n$ where $a_{ij}=1$ if
$v_iv_j\in E(G)$, and $a_{ij}=0$ otherwise. The degree matrix $D(G)$ of
$G$ is the (diagonal) matrix $(d_{ij})$ of order $n$ where
$d_{ii}$ is the degree of vertex $v_i$ in $G$ and $d_{ij}=0$ for $i\neq j$.
The matrix $L(G)=D(G)-L(G)$ is the Laplacian matrix of $G$, and $Q(G)=D(G)+A(G)$ is
the signless
Laplacian matrix of $G$.

The characteristic polynomials  of $A(G)$, $L(G)$ and $Q(G)$ are called the adjacency, Laplacian and signless Laplacian characteristic polynomials of $G$, respectively. The spectra  of $A(G)$, $L(G)$ and $Q(G)$ are called the adjacency, Laplacian and signless Laplacian spectra of $G$, respectively.

The complement $G^c$ of $G$ is the graph with vertex set
$V(G^c)=V(G)$ and for any $u,v\in V(G)$ and $u\neq v$, $uv\in
E(G^c)$ if and only if $uv\not\in E(G)$ .

Let $G^0$ be the empty graph with $V(G^0)=V(G)$, $G^1$ the complete graph with $V(G^1)=V(G)$, $G^+ = G$,
and $G^-=G^c$.

Let $B(G)$ $(B^c(G))$ be the graph with vertex set $V(G)\cup E(G)$
such that $ve$ is an edge in $B(G)$ (resp., in $B^c (G))$ if and
only if $v\in V(G)$, $e\in E(G)$, and vertex $v$ is incident (resp.,
not incident) to edge $e$ in $G$.

The line graph $G^l$ of $G$ is the graph with vertex set $E(G)$ and two vertices are
adjacent in $G^l$ if and only if the corresponding edges in $G$ are
adjacent.

Let $G$ be a graph and  $x,y,z$ variables in $\{0,1,+,-\}$.
The $xyz$-transformation $G^{xyz}$ of $G$ is the graph with
vertex set $V(G^{xyz})=V(G)\cup E(G)$ and the edge set
$E(G^{xyz})=E(G^x)\cup E((G^l)^y)\cup E(W)$, where $W=B(G)$ if $z=
+$, $W=B^c(G)$ if $z=-$, $W$ is the empty graph with $V(W)=V(G)\cup E(G)$ if $z=0$, and $W$ is the complete bipartite graph with partite sets $V(G)$ and $E(G)$ if $z=1$.

For a regular graph $G$,  the adjacency characteristic polynomials (and the
adjacency spectra) of $G^{00+}$, $G^{+0+}$, $G^{0++}$ and  $G^{+++}$ can be found in
\cite{DM}. The adjacency characteristic
polynomials (and the adjacency spectra) of the other seven $G^{xyz}$
with $x,y,z\in \{+,-\}$ can be found in \cite{JK}. 
Deng et al.  \cite{AAJ} determined
the  Laplacian characteristic polynomials of
$G^{xyz}$ of a regular graph $G$ with $x,y,z\in \{0,1,+,-\}$ (The cases $G^{0++}$, $G^{0+0}$ and $G^{00+}$ have early been given in\cite{AK}).

Let  $f(x,G)=\det(xI_n-Q(G))$ be the signless Laplacian characteristic polynomial of $G$, where $I_n$ is the  the identity matrix of order $n=|V(G)|$.
Now we give  the signless Laplacian characteristic
polynomials of  $xyz$-transformation of an $r$-regular graph
$G$ with $n$ vertices and $m$ edges in terms of $n$,$m$, $r$ and the
signless Laplacian spectrum of $G$ with $x,y,z\in \{0,1,+,-\}$.

\section{Some preliminaries}

Let $G$ be a graph with  $V(G)=\{v_1,\dots, v_n\}$ and  $E(G)=\{e_1,\dots, e_m\}.$
The vertex-edge incidence matrix $R(G)$ of $G$ is the $n\times
m$-matrix $(r_{ij})$, where $r_{ij}=1$ if vertex $v_i$ is incident to
edge $e_j$ and $r_{ij}=0$ otherwise. Then $R(G)R(G)^{\top} =Q(G)$, $R(G)^{\top}R(G)=A(G^l)+2I_m$.
Let $q_1(G), q_2(G), \dots, q_n(G)$ be the  the signless Laplacian
eigenvalues of $G$ arranged in non-decreasing order. Then $q_1\ge 0$ and
$R(G)^{\top}R(G)$  has eigenvalues
 $0$ (of multiplicity $m-n$), $q_1, q_2, \dots, q_n$.

For positive integers $p$ and $q$, let $J_{pq}$ be the all-ones $p\times q$-matrix, and in particular, let $J_p=J_{p,p}$.

In the rest of this paper, $G$ is an $r$-regular graph with $n$
vertices and $m$ edges. Then $2m=rn$. We write $A(G)=A$, $Q(G)=Q$,
$R(G)=R$, and $q_i(G)=q_i$ for $i=1, 2, \dots, n$. In particular,
$q_n=2r$.

%
%

%
%

\begin{Lemma}\label{th3}
Let $P(x,y)$ be a polynomial with two variables and real
coefficients. Then

$(1)$ $P(A,J_n)$ has the eigenvalues $P(r,n)$
and $P(q_i-r, 0)$ for $i = 1, 2,\dots, n-1$, or equivalently,
$P(Q,J_n)$ has the eigenvalues $P(2r,n)$
and $P(q_i, 0)$ for $i = 1, 2,\dots, n-1$, and

$(2)$ $P(R^{\top}R,J_m)$ has the eigenvalues $\sigma_m = P(2r,m)$
and $\sigma_i=P(q_i', 0)$ for $i = 1, 2,\dots, m-1$, where $q_i'=0$
for $i=1,2, \dots, m-n$, and $q_i'=q_{i-m+n}$ for $i=m-n+1, \dots,
m-1$.
\end{Lemma}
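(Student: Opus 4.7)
The plan is to produce a common orthonormal eigenbasis for the two symmetric matrices $A$ (respectively $Q$, $R^{\top}R$) and $J_n$ (respectively $J_m$), and then simply read off the eigenvalues of $P$ applied coordinate-wise. The main observation driving both parts is that because $G$ is $r$-regular, the all-ones vector is already an eigenvector of the graph matrix, so that graph matrix commutes with the corresponding $J$ and the two can be simultaneously diagonalized.

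For part~$(1)$, the vector $\mathbf{1}_n$ satisfies $A\mathbf{1}_n=r\mathbf{1}_n$ and $J_n\mathbf{1}_n=n\mathbf{1}_n$. Since $A$ is symmetric, I extend $\mathbf{1}_n/\sqrt{n}$ to an orthonormal basis $u_1,\dots,u_{n-1},\mathbf{1}_n/\sqrt{n}$ of eigenvectors of $A$; on the orthogonal complement $\mathbf{1}_n^{\perp}$ the matrix $J_n$ acts as zero. If $Au_i=\lambda_i u_i$ for $i=1,\dots,n-1$, then $P(A,J_n)u_i=P(\lambda_i,0)u_i$, while $P(A,J_n)(\mathbf{1}_n/\sqrt{n})=P(r,n)(\mathbf{1}_n/\sqrt{n})$. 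Since $Q=A+rI_n$ in the regular case, the $\lambda_i$'s are exactly $q_i-r$, giving the first list. Passing from $P(A,J_n)$ to the equivalent formulation $P(Q,J_n)$ just substitutes $(r,n)\mapsto(2r,n)$ and $(q_i-r,0)\mapsto(q_i,0)$ in these formulas.

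For part~$(2)$, I first verify that $\mathbf{1}_m$ is an eigenvector of $R^{\top}R$. Because every vertex has degree $r$ and every edge has exactly two endpoints, $R\mathbf{1}_m=r\mathbf{1}_n$ and $R^{\top}\mathbf{1}_n=2\mathbf{1}_m$, hence $R^{\top}R\mathbf{1}_m=2r\mathbf{1}_m$. Using the spectral identification recorded in the preliminaries, $R^{\top}R$ has spectrum $\{0^{(m-n)},q_1,\dots,q_n\}$ and the eigenvalue $q_n=2r$ is precisely the one realized by $\mathbf{1}_m$. Extending $\mathbf{1}_m/\sqrt{m}$ to an orthonormal eigenbasis $w_1,\dots,w_{m-1},\mathbf{1}_m/\sqrt{m}$ of $R^{\top}R$, the restriction of $R^{\top}R$ to $\mathbf{1}_m^{\perp}$ has eigenvalues $0$ (with multiplicity $m-n$) together with $q_1,\dots,q_{n-1}$, which is exactly the list $q_1',\dots,q_{m-1}'$ in the statement. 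Since $J_m$ vanishes on $\mathbf{1}_m^{\perp}$, evaluating $P(R^{\top}R,J_m)$ on the basis yields $P(2r,m)$ once and $P(q_i',0)$ for $i=1,\dots,m-1$.

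I do not foresee a serious obstacle; the only subtlety worth flagging is the matching in part~$(2)$, namely that the ``Perron'' eigenvector $\mathbf{1}_m$ of $R^{\top}R$ is indexed by the largest eigenvalue $q_n=2r$, so that after restricting to $\mathbf{1}_m^{\perp}$ the surviving signless-Laplacian eigenvalues are $q_1,\dots,q_{n-1}$ rather than some other sub-list. Once this is observed, the rest amounts to the simultaneous-diagonalization argument applied to $P$.
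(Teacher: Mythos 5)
Your proposal is correct and is essentially the paper's own argument: both proofs exploit that the all-ones vector is a common eigenvector of the graph matrix and of $J$, while $J$ annihilates its orthogonal complement, so the two matrices are simultaneously diagonalized and the eigenvalues of $P$ can be read off coordinate-wise. The only cosmetic difference is that the paper verifies this monomial-by-monomial via $A^sJ_n^tJ_{n1}=r^sn^tJ_{n1}$ (and similarly for $R^{\top}R$), whereas you phrase it as a restriction to $\mathbf{1}^{\perp}$; the bookkeeping of the eigenvalue $q_n=2r$ against $\mathbf{1}_m$ in part (2), which you rightly flag, is handled the same way in both.
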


\begin{proof}
(1) Let $X_1, X_2,\dots, X_n$ be orthogonal eigenvectors of $A$ such
that $AX_i=(q_i-r)X_i$ for $i=1,2, \dots, n$. Since $A$ has equal row sums,  $X_n = J_{n1}$.
Since $J_{n}^2=nJ_{n}$, $J_nJ_{n1}=nJ_{n1}$ and $AJ_{n1}=rJ_{n1}$,
we have  $A^sJ_n^tJ_{n1}=A^sn^tJ_{n1}=r^sn^tJ_{n1}$ for nonnegative
integers $s$ and $t$, and  thus $P(Q, J_{nn})X_n=P(r, n)X_n$. For
$i=1,2, \dots, n-1$, since $J_nX_i=0$ and $A^sX_i=(q_i-r)^sX_i$, we
have $P(A,J_{n})X_i=P(q_i-r,0)X_i$.

$(2)$ Since $R^{\top}R=A(G^l)+2I_m$, $R(G)^{\top}R(G)$ have equal row sums. Note that
$(R^{\top}R)^sJ_m^tJ_{m1}=(R^{\top}R)^sm^tJ_{m1}=(2r)^sn^tJ_{n1}$
for nonnegative integers $s$ and $t$. By similar argument as in (1), the result follows.
%
%
\end{proof}

\begin{Lemma}\label{l2} Let $B$ and $C$ be square matrices. Then
\[
\left|
\begin{array}{cccc}
B &  E\\
   F   & C
\end{array}
\right|= \left\{
\begin{array}{lll}
|B||C-FB^{-1}E| &\mbox{if $B$ is invertible},\\
  |C||B-EC^{-1}F| &\mbox{if $C$ is invertible}.
\end{array}
\right.
\]
\end{Lemma}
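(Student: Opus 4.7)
The plan is to prove Lemma \ref{l2} by exhibiting an explicit block LU (resp.\ UL) factorization of the partitioned matrix and then taking determinants, which reduces to the determinant of a block triangular matrix.

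For the first case, assume $B$ is invertible. I would verify directly by block matrix multiplication that
\[
\begin{pmatrix} B & E \\ F & C \end{pmatrix}
= \begin{pmatrix} I & 0 \\ FB^{-1} & I \end{pmatrix}
  \begin{pmatrix} B & E \\ 0 & C - FB^{-1}E \end{pmatrix}.
\]
Taking determinants and using multiplicativity, the left factor is block lower triangular with identity blocks on the diagonal, so its determinant is $1$, while the right factor is block upper triangular, so its determinant is $|B|\cdot |C-FB^{-1}E|$. This yields the first formula.

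For the second case, assume $C$ is invertible. By the analogous block UL factorization
\[
\begin{pmatrix} B & E \\ F & C \end{pmatrix}
= \begin{pmatrix} I & EC^{-1} \\ 0 & I \end{pmatrix}
  \begin{pmatrix} B - EC^{-1}F & 0 \\ F & C \end{pmatrix},
\]
the same determinant argument gives $|C|\cdot |B-EC^{-1}F|$.

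There is essentially no obstacle here: the only thing to check is the matrix identity, which is a one-line computation of the $(1,1)$, $(1,2)$, $(2,1)$, $(2,2)$ blocks. The slight subtlety is that $E$ and $F$ need not be square, but the factorizations above only require the dimensions of $E$ and $F$ to match those of $B$ and $C$, which is automatic from the assumption that $B$ and $C$ are square and the overall matrix is defined. Hence both identities follow with no further assumptions beyond the invertibility hypothesis in each case.
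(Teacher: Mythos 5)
Your proof is correct: both block factorizations check out by direct multiplication, and the determinant of each unit block-triangular factor is $1$ while the remaining block-triangular factor contributes $|B|\,|C-FB^{-1}E|$ (resp.\ $|C|\,|B-EC^{-1}F|$). The paper states this standard Schur-complement identity without any proof, so there is nothing to compare against; your argument is the usual one and supplies the missing justification, the only implicit ingredient being the (also standard) fact that the determinant of a block-triangular matrix with square diagonal blocks is the product of the determinants of those blocks.
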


Note that  $q_{n-i}(G^c)=n-2-q_i$ for every $i\in\{1,2,\dots, n-1\}$
and $q_n(G^c)=2(n-r-1)$.  Equivalently, we have the following lemma.

\begin{Lemma}\label{th2}
%
%
$(\lambda-n+2+2r)f(x,G^c)=(-1)^{n}(\lambda-2n+2+2r)f(n-2-\lambda,
G)$.
\end{Lemma}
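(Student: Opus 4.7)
The plan is to translate the eigenvalue identities $q_{n-i}(G^c)=n-2-q_i$ (for $1\le i\le n-1$) and $q_n(G^c)=2(n-r-1)$ into a factored form of $f(\lambda,G^c)$ and then compare it with the factored form of $f(n-2-\lambda,G)$. This is a pure bookkeeping argument: nothing beyond the stated eigenvalue relation is required, so I would not expect genuine obstacles, only careful tracking of signs and the distinguished eigenvalues $2r$ and $2(n-r-1)$.

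Concretely, I would first write
\[
f(\lambda,G^c)=\prod_{i=1}^{n}\bigl(\lambda-q_i(G^c)\bigr)=\bigl(\lambda-2(n-r-1)\bigr)\prod_{i=1}^{n-1}\bigl(\lambda-(n-2-q_i)\bigr),
\]
isolating the one eigenvalue of $G^c$ that comes from the all-ones direction. Setting $\mu:=n-2-\lambda$, each factor $\lambda-(n-2-q_i)=q_i-\mu=-(\mu-q_i)$, so
\[
\prod_{i=1}^{n-1}\bigl(\lambda-(n-2-q_i)\bigr)=(-1)^{n-1}\prod_{i=1}^{n-1}(\mu-q_i).
\]
Next, I would invoke $q_n=2r$ (stated at the end of Section 2) to write
\[
f(\mu,G)=\prod_{i=1}^{n}(\mu-q_i)=(\mu-2r)\prod_{i=1}^{n-1}(\mu-q_i),
\]
and therefore $\prod_{i=1}^{n-1}(\mu-q_i)=f(\mu,G)/(\mu-2r)$ as polynomial identities (or, if one prefers, after clearing the denominator to avoid worrying about the case $\mu=2r$).

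Finally, I would substitute back, observe that $2(n-r-1)=2n-2-2r$ and that $\mu-2r=n-2-\lambda-2r=-(\lambda-n+2+2r)$, and collect signs. Multiplying the relation by $(\mu-2r)$ clears the denominator and absorbs one factor of $-1$, so the $(-1)^{n-1}$ becomes $(-1)^n$ and the leftover factor $\lambda-2(n-r-1)=\lambda-2n+2+2r$ appears on the right, yielding exactly
\[
(\lambda-n+2+2r)f(\lambda,G^c)=(-1)^{n}(\lambda-2n+2+2r)f(n-2-\lambda,G).
\]
The only place to be careful is the sign accounting at this last step, since three separate sources of signs combine: the $(-1)^{n-1}$ from reversing each linear factor under $\mu\leftrightarrow\lambda$, the sign flipping $(\mu-2r)$ into $-(\lambda-n+2+2r)$, and the ordering of the exceptional eigenvalues $q_n=2r$ and $q_n(G^c)=2(n-r-1)$.
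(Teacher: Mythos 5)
Your proof is correct and takes essentially the same route as the paper: the paper simply states the eigenvalue relations $q_{n-i}(G^c)=n-2-q_i$ and $q_n(G^c)=2(n-r-1)$ and declares the lemma ``equivalent,'' and your factorization of $f(\lambda,G^c)$ and $f(n-2-\lambda,G)$ with the sign bookkeeping is precisely the translation the paper leaves implicit. Your sign accounting checks out, so nothing further is needed.
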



\section{Signless Laplacian characteristic polynomials of $G^{xyz}$ with $z=0$}

Obviously, $f(\lambda, G^0)=\lambda^n$, $f(\lambda, G^+)=f(\lambda,
G)$ and $f(\lambda, G^1)=(\lambda-2n+2)(\lambda-n+2)^{n-1}$. Note
that  $G^l$ is regular of degree $2r-2$ and $f(G^l,\lambda)=(\lambda
-2r+4)^{m-n}f(\lambda-2r+4, G)$.

For $x, y\in \{0, 1,+, -\}$, $G^{xy0}$ consists of vertex disjoint
$G^x$ and $(G^l)^y)$, and then  $f(\lambda, G^{xy0})=f(\lambda,
G^x)f(\lambda, (G^l)^y)$. Thus we have the following conclusions (16
cases).
\begin{eqnarray*}
f(\lambda,
G^{x00})&=&\lambda^mf(\lambda, G^x) \text{ for } x\in\{0,1,+\},\\
f(\lambda, G^{x10})&=&(\lambda-2m+2)(\lambda-m+2)^{m-1}f(\lambda,
G^x) \text{ for }  x\in\{0,1,+\},\\
 f(\lambda,
G^{-00})&=&(-1)^n\lambda^m(\lambda-n+2+2r)^{-1}\\
&&(\lambda-2n+2+2r)f(n-2-\lambda,
G),\\
f(\lambda,
G^{-10})&=&(-1)^n(\lambda-2m+2)(\lambda-m+2)^{m-1}(\lambda-n+2+2r)^{-1}\\
&&(\lambda-2n+2+2r)f(n-2-\lambda, G)\\
 f(\lambda,
G^{0+0})&=&\lambda^n(\lambda-2r+4)^{m-n}f(\lambda-2r+4,
G^x),\\
f(\lambda,
G^{1+0})&=&(\lambda-2n+2)(\lambda-n+2)^{n-1}(\lambda-2r+4)^{m-n}f(\lambda-2r+4,
G^x),\\
f(\lambda,
G^{++0})&=&(\lambda-2r+4)^{m-n}f(\lambda,G)f(\lambda-2r+4,
G^x),\\
f(\lambda,
G^{-+0})&=&(-1)^n(\lambda-n+2+2r)^{-1}(\lambda-2n+2+2r)\\
&&f(n-2-\lambda,
G)f(\lambda-2r+4, G^x),\\
 f(\lambda,
G^{0-0})&=&(-1)^{n-1}\lambda^n(\lambda-m+4r-2)^{-1}(\lambda+4r-2m-2)\\
&&(\lambda+2r-2-m)^{m-n}f(m-2r-\lambda+2,
G),\\
f(\lambda,
G^{1-0})&=&(-1)^{n-1}(\lambda-2n+2)(\lambda-n+2)^{n-1}(\lambda-m+4r-2)^{-1}\\
&&(\lambda+4r-2m-2)
(\lambda+2r-2-m)^{m-n}f(m-2r-\lambda+2,
G), \\
f(\lambda,
G^{+-0})&=&(-1)^{n-1}(\lambda-m+4r-2)^{-1}(\lambda+4r-2m-2)(\lambda+2r-2-m)^{m-n}\\&&f(\lambda,G)f(m-2r-\lambda+2,
G),\\
f(\lambda, G^{--0})&=&-(\lambda-n+2+2r)^{-1}(\lambda-2n+2+2r)
(\lambda-m+4r-2)^{-1}\\
&&(\lambda+4r-2m-2)(\lambda+2r-2-m)^{m-n}\\
&&f(n-2-\lambda,
G)f(m-2r-\lambda+2, G).
\end{eqnarray*}

\section{Signless Laplacian characteristic polynomials of $G^{xyz}$ with  $z=1$}

Note that $G^{001}$
 is a complete bipartite graph and $G^{111}=K_{n+m}$. Then
 \[ f(\lambda,
G^{001})=\lambda(\lambda-m-n)(\lambda-m)^{n-1}(\lambda-n)^{m-1},\]
\[f(\lambda, G^{111})=(\lambda-2n-2m+2)(\lambda-m-n+2)^{m+n-1}.\]

\begin{theo}
 \begin{eqnarray*}
 f(\lambda,G^{-01})&=&[(\lambda-n)(\lambda-2n-m+2r+2)-mn](\lambda-n)^{m-1}\\
 &&\prod\limits_{i=1}^{n-1}(\lambda-n-m+2+q_i),
\end{eqnarray*}
\end{theo}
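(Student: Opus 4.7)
\bigskip

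\noindent\textbf{Proof proposal.} My plan is to write the signless Laplacian matrix $Q(G^{-01})$ as a $2\times 2$ block matrix with respect to the natural vertex partition $V(G^{-01})=V(G)\cup E(G)$, apply Lemma~\ref{l2} to reduce $\det(\lambda I_{n+m}-Q(G^{-01}))$ to the determinant of an $n\times n$ matrix polynomial in $A(G)$ (equivalently $Q(G)$) and $J_n$, and then read off the eigenvalues of that matrix via Lemma~\ref{th3}(1).

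First I would compute the two diagonal blocks and the off-diagonal block. Every vertex in $V(G)$ has degree $(n-1-r)+m$ in $G^{-01}$ (complement-degree inside $V(G)$ plus $m$ from the complete bipartite $W$), and every vertex in $E(G)$ has degree $n$. The adjacency blocks are $A(G^c)=J_n-I_n-A$, the zero block on $E(G)$, and $J_{n,m}$ across. Adding $D$ gives
\[
Q(G^{-01})=\begin{pmatrix}(n+m-r-2)I_n+J_n-A & J_{n,m}\\ J_{m,n} & nI_m\end{pmatrix}.
\]

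Next, working on the open set where $\lambda\neq n$ so that the bottom-right block $(\lambda-n)I_m$ is invertible, I would apply the second formula in Lemma~\ref{l2}. Using $J_{n,m}J_{m,n}=mJ_n$, the Schur complement is
\[
(\lambda-n+r-m+2)I_n+A-\tfrac{\lambda-n+m}{\lambda-n}J_n,
\]
so $\det(\lambda I-Q(G^{-01}))=(\lambda-n)^m\det\bigl(P(A,J_n)\bigr)$, where $P(x,y)=(\lambda-n+r-m+2)+x-\tfrac{\lambda-n+m}{\lambda-n}\,y$. By Lemma~\ref{th3}(1), the $n$ eigenvalues of $P(A,J_n)$ are $P(r,n)$ together with $P(q_i-r,0)=\lambda-n-m+2+q_i$ for $i=1,\dots,n-1$.

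The only real computation left is to simplify $P(r,n)=(\lambda-n+2r-m+2)-\tfrac{n(\lambda-n+m)}{\lambda-n}$. Clearing the denominator $\lambda-n$ and expanding gives a quadratic in $\lambda$ which I would check equals $\tfrac{(\lambda-n)(\lambda-2n-m+2r+2)-mn}{\lambda-n}$; this single algebraic identity is the step most likely to harbour a sign error but is otherwise routine. Multiplying through by $(\lambda-n)^m$ cancels the spurious denominator and produces exactly
\[
\bigl[(\lambda-n)(\lambda-2n-m+2r+2)-mn\bigr](\lambda-n)^{m-1}\prod_{i=1}^{n-1}(\lambda-n-m+2+q_i),
\]
and the identity extends from $\lambda\neq n$ to all $\lambda$ by polynomial continuity, completing the proof.
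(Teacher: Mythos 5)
Your proposal is correct and follows essentially the same route as the paper: block decomposition of $Q(G^{-01})$, Schur complement via Lemma~\ref{l2} against the invertible block $(\lambda-n)I_m$, and Lemma~\ref{th3}(1) to read off the eigenvalues, with the constant eigenvalue simplifying to $\frac{(\lambda-n)(\lambda-2n-m+2r+2)-mn}{\lambda-n}$ exactly as you predict. The only cosmetic difference is that the paper clears the denominator $\lambda-n$ before invoking Lemma~\ref{th3} (working with $(\lambda-n)$ times your Schur complement), while you clear it at the end.
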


\begin{proof} Obviously,
\begin{eqnarray*}
A(G^{-01})= \left(
\begin{array}{cccc}
A(G^c)  &  J_{nm}\\
J_{mn} & 0
\end{array}
\right)=\left(
\begin{array}{cccc}
J_n-I_n-A  &  J_{nm}\\
J_{mn} & 0
\end{array}
\right)\end{eqnarray*}   and  \begin{eqnarray*} D(G^{-01})= \left(
\begin{array}{cccc}
(n+m-r-1)I_n  &  0\\
0 & nI_m
\end{array}
\right).\end{eqnarray*}
Then
\begin{eqnarray*}
\begin{array}{lll}
f(\lambda, G^{-01})&=& \det(\lambda I_{n+m}-Q(G^{-01}))\\
&=&\left|
\begin{array}{cccc}
(\lambda-n-m+r+2)I_n-J_n+A  &  -J_{nm}\\
     -J_{mn}      & (\lambda-n)I_m
\end{array}
\right|.
\end{array}
\end{eqnarray*}
Clearly, it is sufficient to prove our claim for $\lambda\neq n$.
By Lemma \ref{l2},
\[
f(\lambda, G^{-01})=(\lambda-n)^{m-n}|B|.
\]
where
\[
B=(\lambda-n)(\lambda-n-m+r+2)I_n-(\lambda-n)J_n+(\lambda-n)A-mJ_n.
\]
By Lemma \ref{th3}, the
eigenvalues of $B$ are
\begin{eqnarray*}
\sigma_n
&=&(\lambda-n)(\lambda-n-m+r+2)-(\lambda-n)n+r(\lambda-n)-mn
\\&=&(\lambda-n)(\lambda-2n-m+2r+2)-mn
\end{eqnarray*}
and
\begin{eqnarray*}
\sigma_i&=&(\lambda-n)(\lambda-n-m+r+2)+(\lambda-n)(q_i-r)\\
&=&(\lambda-n)(\lambda-n-m+2+q_i)
\end{eqnarray*}
for $i=1,2, \dots, n-1$.
Then
\[
|B|=[(\lambda-n)(\lambda-2n-m+2r+2)-mn](\lambda-n)^{n-1}\prod\limits_{i=1}^{n-1}(\lambda-n-m+2+q_i),
 \]
 and thus the result follows.
\end{proof}

Similarly, we have the following theorem.

\begin{theo}
\begin{eqnarray*}
f(\lambda,
G^{101})&=&[(\lambda-n)(\lambda-m-2n+2)-mn](\lambda-m-n+2)^{n-1}(\lambda-n)^{m-1},\\
f(\lambda,
G^{+01})&=&[(\lambda-n)(\lambda-2r-m)-mn](\lambda-n)^{m-1}\prod\limits_{i=1}^{n-1}(\lambda-m-q_i).
\end{eqnarray*}
\end{theo}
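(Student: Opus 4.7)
The plan is to mimic the Schur-complement approach used for $G^{-01}$, applied to each of $G^{101}$ and $G^{+01}$ in turn. In both cases $z=1$ means the bipartite block between $V(G)$ and $E(G)$ is $J_{nm}$, and $y=0$ means the $E(G)$-induced block of the adjacency matrix is zero; the $V(G)$-induced block is $J_n - I_n$ for $G^{101}$ and $A$ for $G^{+01}$. The corresponding diagonals of the degree matrix are $(n-1+m)I_n$ and $(r+m)I_n$ on the $V(G)$ side, while the $E(G)$-side is $nI_m$ in both cases. This presents $\lambda I - Q(G^{xy1})$ as a $2\times 2$ block matrix whose southeast block is $(\lambda-n)I_m$.

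Assuming $\lambda\neq n$ (the general case then follows since both sides of each identity are polynomials in $\lambda$), apply Lemma~\ref{l2} with the invertible block $C=(\lambda-n)I_m$. The cross term evaluates to $EC^{-1}F = \tfrac{m}{\lambda-n}J_n$, so after multiplying the Schur complement through by $\lambda-n$ I obtain
\[
B'_{101} = (\lambda-n)(\lambda-n-m+2)I_n - (\lambda-n+m)J_n
\]
for $G^{101}$, and, after using the $r$-regularity identity $A = Q - rI_n$ to eliminate $A$,
\[
B'_{+01} = (\lambda-n)(\lambda-m)I_n - (\lambda-n)Q - mJ_n
\]
for $G^{+01}$. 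Both matrices are now polynomials in $Q$ (or merely in $I_n$) and $J_n$, so Lemma~\ref{th3}(1) applies directly.

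For $B'_{101}$ the eigenvalues come from the elementary spectrum of $J_n$: the distinguished eigenvalue is $(\lambda-n)(\lambda-2n-m+2)-mn$, while $(\lambda-n)(\lambda-n-m+2)$ has multiplicity $n-1$. For $B'_{+01}$ Lemma~\ref{th3}(1) yields $(\lambda-n)(\lambda-m-2r)-mn$ together with $(\lambda-n)(\lambda-m-q_i)$ for $i=1,\dots,n-1$. Multiplying these eigenvalues and combining with the prefactor $(\lambda-n)^{m}\cdot(\lambda-n)^{-n}=(\lambda-n)^{m-n}$ arising from $|C|$ and the row-wise rescaling produces the two claimed formulas, in each of which $(\lambda-n)^{m-1}$ appears as required.

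The only genuine obstacle is careful bookkeeping of the $(\lambda-n)$ powers, so that the final exponent comes out to exactly $m-1$; once one recognizes that $A$ must be rewritten as $Q-rI_n$ in the $G^{+01}$ case so that Lemma~\ref{th3}(1) can be invoked, the rest is mechanical and parallels the proof of the $G^{-01}$ case almost verbatim.
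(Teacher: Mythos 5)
Your proposal is correct and is exactly the argument the paper intends: the paper omits the proof of this theorem with the remark that it is ``similar'' to the $G^{-01}$ case, and your Schur-complement reduction via Lemma~\ref{l2} followed by Lemma~\ref{th3}(1) reproduces that computation, with the power bookkeeping $(\lambda-n)^{m-n}\cdot(\lambda-n)^{n-1}=(\lambda-n)^{m-1}$ coming out right in both cases. Your rewriting $A=Q-rI_n$ in the $G^{+01}$ case is an immaterial stylistic variant, since Lemma~\ref{th3}(1) is stated equivalently for polynomials in $A$ and in $Q$.
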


\begin{theo}
\[ f(\lambda,
G^{+11})=[(\lambda-m-2r)(\lambda-2m-n+2)-mn](\lambda-m-n+2)^{m-1}\prod\limits_{i=1}^{n-1}(\lambda-m-q_i).
\]
\end{theo}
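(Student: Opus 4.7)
The plan is to expand the signless Laplacian matrix of $G^{+11}$ in block form according to the partition $V(G^{+11})=V(G)\cup E(G)$ and apply the Schur complement formula (Lemma \ref{l2}), with the diagonal action of $Q(G^{+11})$ on $J_{n1}$ and $J_{m1}$ giving us the non-trivial scalar factors through Lemma \ref{th3}.

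First I would record that, in $G^{+11}$, every $v\in V(G)$ has degree $r+m$ (its $G$-degree plus connections to all edges) and every $e\in E(G)$ has degree $(m-1)+n$ (its $(G^l)^1$-degree plus connections to all vertices), while the adjacency blocks are $A$, $J_{nm}$, $J_{mn}$, and $J_m-I_m$. Therefore
\[
\lambda I_{n+m}-Q(G^{+11})=\left(\begin{array}{cc} (\lambda-r-m)I_n-A & -J_{nm}\\ -J_{mn} & (\lambda-m-n+2)I_m-J_m\end{array}\right).
\]
Set $B=(\lambda-r-m)I_n-A$. By Lemma \ref{th3}(1), $B$ has eigenvalues $\lambda-2r-m$ on $J_{n1}$ and $\lambda-m-q_i$ for $i=1,\dots,n-1$; so $|B|=(\lambda-2r-m)\prod_{i=1}^{n-1}(\lambda-m-q_i)$, and $B$ is invertible except at finitely many $\lambda$. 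Since both sides of the claimed equality are polynomials in $\lambda$, it suffices to prove it for $\lambda$ where $B$ is invertible.

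Next I would apply Lemma \ref{l2} in the form $|M|=|B|\,|C-FB^{-1}E|$. Because $G$ is $r$-regular, $AJ_{n1}=rJ_{n1}$, so $BJ_{n1}=(\lambda-2r-m)J_{n1}$ and hence $B^{-1}J_{nm}=(\lambda-2r-m)^{-1}J_{nm}$. This gives
\[
FB^{-1}E=J_{mn}B^{-1}J_{nm}=\frac{n}{\lambda-2r-m}J_m,
\]
so that
\[
C-FB^{-1}E=(\lambda-m-n+2)I_m-\left(1+\frac{n}{\lambda-2r-m}\right)J_m.
\]
This matrix has eigenvalue $\lambda-m-n+2$ with multiplicity $m-1$ on $J_{m1}^{\perp}$, and eigenvalue $(\lambda-2m-n+2)-\frac{mn}{\lambda-2r-m}$ on $J_{m1}$.

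Finally, multiplying $|B|$ and $|C-FB^{-1}E|$ and combining the factor $(\lambda-2r-m)$ from $|B|$ with the denominator in the last eigenvalue yields the bracket $(\lambda-2r-m)(\lambda-2m-n+2)-mn$, producing the stated formula. The only real obstacle is bookkeeping: one must line up the $(\lambda-2r-m)$ factor from $|B|$ with the Schur-complement denominator so that the product consolidates cleanly, but this is elementary and parallels the computation in Theorem 4.1.
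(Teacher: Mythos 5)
Your proof is correct, but it routes the Schur-complement computation through the opposite block from the paper. The paper first performs a block row operation (adding $-\frac{1}{2}R^{\top}$ times the first row to the second, using $R^{\top}J_{nm}=2J_m$) so that the $(2,2)$ block becomes the scalar matrix $(\lambda-m-n+2)I_m$, then applies Lemma \ref{l2} with that block inverted, reducing everything to an $n\times n$ matrix that is a polynomial in $A$ and $J_n$ to which Lemma \ref{th3}(1) applies. You instead invert the $(1,1)$ block $B=(\lambda-r-m)I_n-A$ directly, exploiting the fact that regularity makes $J_{n1}$ an eigenvector of $B$, so that $B^{-1}J_{nm}=(\lambda-2r-m)^{-1}J_{nm}$ and the Schur complement $C-FB^{-1}E$ is a linear combination of $I_m$ and $J_m$ whose spectrum is immediate. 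Your route avoids the incidence matrix $R$ entirely and the associated identities $J_{nm}R^{\top}=rJ_n$, $J_{nm}R^{\top}A=r^2J_n$, which makes the bookkeeping lighter here; the price is a rational-function intermediate expression, which you correctly dispose of by the polynomial-identity argument (and the paper needs the same caveat for $\lambda\neq m+n-2$). The paper's version has the advantage that the same template carries over to the cases where the off-diagonal blocks are $\pm R$ rather than all-ones matrices (e.g.\ $z=+$ or $z=-$), where one cannot push $B^{-1}$ through the off-diagonal block as a scalar; for the present theorem the two approaches are equally valid and yield the same factorization.
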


\begin{proof} Obviously,
\begin{eqnarray*}
A(G^{+11})= \left(
\begin{array}{cccc}
A  &  J_{nm}\\
J_{mn} & J_m-I_m
\end{array}
\right)
\end{eqnarray*}
and
\begin{eqnarray*}
D(G^{+11})= \left(
\begin{array}{cccc}
(m+r)I_n  &  0\\
0 & (m+n-1)I_m
\end{array}
\right).\end{eqnarray*}
Then
\begin{eqnarray*}
f(\lambda, G^{+11})&=&\det(\lambda I_{n+m}-Q(G^{+11}))\\
&=& \left|
\begin{array}{cccc}
(\lambda-m-r)I_n-A  &  -J_{nm}\\
     -J_{mn}      & (\lambda-m-n+2)I_m-J_m
\end{array}
\right|.
\end{eqnarray*}
Let
\begin{eqnarray*}
M&=& \left(
\begin{array}{cccc}
(\lambda-m-r)I_n-A  &  -J_{nm}\\
     -J_{mn}      & (\lambda-m-n+2)I_m-J_m
\end{array}
\right).
\end{eqnarray*}
Obviously, $R^{\top}J_{nm}=2J_m$. Thus
multiplying the first row of the block matrix $M$ by
$-\frac{1}{2}R^{\top}$ and adding the result to the second row of $M$, we
obtain a new matrix
\begin{eqnarray*}
M_1=\left(
\begin{array}{cccc}
(\lambda-m-r)I_n-A  &  -J_{nm}\\
     \frac{m+r-\lambda}{2}R^{\top}+\frac{1}{2}R^{\top}A-J_{mn}      &
     (\lambda-m-n+2)I_m
\end{array}
\right).
\end{eqnarray*} Clearly, $f(\lambda, G^{+11})=|M|=|M_1|$ and it is
sufficient to prove our claim for $\lambda\neq m+n-2$. By
Lemma \ref{l2},
\[
f(\lambda, G^{+11})=(\lambda-m-n+2)^{m-n}|B|,
\]
where
\begin{eqnarray*}
B&=&(\lambda-m-r)(\lambda-n-m+2)I_n-(\lambda-m-n+2)A+\frac{m+r-\lambda}{2}J_{nm}R^{\top}\\
&&+\frac{1}{2}J_{nm}R^{\top}A-J_{nm}J_{mn}.
\end{eqnarray*}
Since $J_{nm}R^{\top}=rJ_n$, $J_{nm}R^{\top}A=r^2J_n$ and
$J_{nm}J_{mn}=mJ_n$, we have
\[B=(\lambda-m-r)(\lambda-n-m+2)I_n-(\lambda-m-n+2)A+\frac{(m+2r-\lambda)r}{2}J_{n}-mJ_{n}.\]
By Lemma \ref{th3}, the
eigenvalues of $B$ are
\begin{eqnarray*}
\sigma_n
&=&(\lambda-m-r)(\lambda-n-m+2)-(\lambda-m-n+2)r+\frac{(m+2r-\lambda)r}{2}n-mn
\\&=&(\lambda-m-2r)(\lambda-n-2m+2)-mn
\end{eqnarray*} and
\begin{eqnarray*}
\sigma_i&=&(\lambda-m-r)(\lambda-n-m+2)-(\lambda-m-n+2)(q_i-r)\\
&=&(\lambda-n-m+2)(\lambda-m-q_i).
\end{eqnarray*}
Then $|B|=[(\lambda-m-2r)(\lambda-n-2m+2)-mn](\lambda-n-m+2)^{n-1}\prod\limits_{i=1}^{n-1}(\lambda-m-q_i)$,  and thus the result follows.
\end{proof}

Similarly, we have the following theorem.

\begin{theo}
\begin{eqnarray*}
f(\lambda,
G^{011})&=&[(\lambda-m)(\lambda-2m-n+2)-mn](\lambda-m)^{n-1}(\lambda-m-n+2)^{m-1},\\
f(\lambda,
G^{-11})&=&[(\lambda-2m-n+2)(\lambda-m-2n+2r+2)-mn](\lambda-m-n+2)^{m-1}\\&&\prod\limits_{i=1}^{n-1}(\lambda-m-n+2+q_i).
\end{eqnarray*}
\end{theo}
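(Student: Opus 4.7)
The plan is to mirror the block-matrix/Schur-complement strategy used in the proof of Theorem~4.2: for each of $G^{011}$ and $G^{-11}$, I write $\lambda I_{n+m}-Q(G^{xyz})$ as a $2\times 2$ block matrix with blocks indexed by $V(G)$ and $E(G)$, then apply Lemma~\ref{l2} to collapse the determinant to a single $m\times m$ Schur complement that turns out to be a linear combination of $I_m$ and $J_m$ (hence diagonalisable via the splitting $\mathbb{R}^m=\mathrm{span}\{J_{m1}\}\oplus\{J_{m1}\}^{\perp}$).

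For $G^{011}$, the vertices in $V(G)$ have degree $m$ (only contributed by $K_{n,m}$, since $G^0$ adds nothing) and those in $E(G)$ have degree $m+n-1$, so
\[
\lambda I_{n+m}-Q(G^{011})=\begin{pmatrix} (\lambda-m)I_n & -J_{nm}\\ -J_{mn} & (\lambda-m-n+2)I_m-J_m\end{pmatrix}.
\]
Applying Lemma~\ref{l2} with $B=(\lambda-m)I_n$ (for $\lambda\ne m$), together with $J_{mn}J_{nm}=nJ_m$, the Schur complement becomes $(\lambda-m-n+2)I_m-\bigl(1+\tfrac{n}{\lambda-m}\bigr)J_m$, whose determinant is immediate, and a short tidy-up produces the stated formula for $f(\lambda,G^{011})$.

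For $G^{-11}$, the degrees become $m+n-r-1$ on $V(G)$ and $m+n-1$ on $E(G)$, and the upper-left block of $\lambda I-Q$ is $B=(\lambda-m-n+r+2)I_n-J_n+A$, which is $P(A,J_n)$ with $P(x,y)=(\lambda-m-n+r+2)+x-y$. Lemma~\ref{th3}(1) then gives the eigenvalues of $B$: the value $P(r,n)=\lambda-m-2n+2r+2$ on $J_{n1}$, and $P(q_i-r,0)=\lambda-m-n+2+q_i$ on $\{J_{n1}\}^{\perp}$; these furnish exactly the product factor $\prod_{i=1}^{n-1}(\lambda-m-n+2+q_i)$ that appears in the final answer. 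Because $J_{n1}$ is an eigenvector of $B$, we have $B^{-1}J_{n1}=(\lambda-m-2n+2r+2)^{-1}J_{n1}$, and combined with $J_{nm}=J_{n1}J_{1m}$ this gives $J_{mn}B^{-1}J_{nm}=\tfrac{n}{\lambda-m-2n+2r+2}J_m$. The Schur complement is again of the form $aI_m+bJ_m$, and its determinant is computed exactly as in the $G^{011}$ case.

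The only real obstacle is the algebraic simplification at the end of the $G^{-11}$ computation. After multiplying $|B|$ with the Schur-complement determinant, the nontrivial scalar factor comes out as $(\lambda-m-n+2)(\lambda-m-2n+2r+2)-m(\lambda-m-n+2r+2)$, and one has to recognise it as $(\lambda-2m-n+2)(\lambda-m-2n+2r+2)-mn$. This reduces to the one-line identity $(\lambda-m-n+2)-(\lambda-m-2n+2r+2)=n-2r$. Exceptional values of $\lambda$ where some block fails to be invertible are handled by the standard polynomial-identity argument, since both sides of each claimed formula are polynomials in $\lambda$.
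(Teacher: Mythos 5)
Your proposal is correct: I checked both computations and they reproduce the stated formulas exactly (including the degree counts $m$ and $m+n-1$ for $G^{011}$, and $m+n-r-1$ and $m+n-1$ for $G^{-11}$). It does, however, take a somewhat different route from the one the paper intends by its ``Similarly'' pointing back to the $G^{+11}$ proof. There, the authors first perform a block row operation with $-\tfrac{1}{2}R^{\top}$ (using $R^{\top}J_{nm}=2J_m$) to clear the $J_m$ term from the lower-right block, so that Lemma~\ref{l2} can be applied with the $m\times m$ block as a scalar matrix, reducing everything to an $n\times n$ matrix handled by Lemma~\ref{th3}(1). You instead invert the \emph{vertex} block directly: since $J_{n1}$ is an eigenvector of the upper-left block $B$, the term $J_{mn}B^{-1}J_{nm}$ is an explicit multiple of $J_m$, so the Schur complement is $aI_m+bJ_m$ and its determinant is immediate without any row operation and without Lemma~\ref{th3}(2). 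For the $y=1$ cases this is arguably cleaner, precisely because the edge-side adjacency is $J_m-I_m$ rather than $R^{\top}R-2I_m$; the paper's row-operation trick is what one needs when $R^{\top}R$ appears, and is overkill here. One small expository quibble: the final simplification is not really the identity $(\lambda-m-n+2)-(\lambda-m-2n+2r+2)=n-2r$ you cite, but rather the substitution $\lambda-m-n+2r+2=(\lambda-m-2n+2r+2)+n$, which converts $-m(\lambda-m-n+2r+2)$ into $-m(\lambda-m-2n+2r+2)-mn$ and yields $[(\lambda-m-n+2)-m](\lambda-m-2n+2r+2)-mn$ as required; the end result you state is nonetheless correct.
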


\begin{theo}
\begin{eqnarray*}
f(\lambda,
G^{0+1})&=&[(\lambda-m)(\lambda-n-4r+4)-mn](\lambda-n-2r+4)^{m-n}(\lambda-m)^{n-1}
\\&&
\prod\limits_{i=1}^{n-1}(\lambda-n-2r+4-q_i).
\end{eqnarray*}
\end{theo}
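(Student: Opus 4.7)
The plan is to mirror the proof of the $G^{+11}$ theorem but with the empty graph on the vertex side and the line graph on the edge side. First I would write down the blocks of $Q(G^{0+1})$: since there are no edges inside $V(G)$, each vertex has degree $m$ (joined to every edge in the $z=1$ bipartite part), each edge has degree $n+(2r-2)$, and using $A(G^l)=R^{\top}R-2I_m$ this gives
\[
\lambda I_{n+m}-Q(G^{0+1})=\left(\begin{array}{cc} (\lambda-m)I_n & -J_{nm}\\ -J_{mn} & (\lambda-n-2r+4)I_m-R^{\top}R\end{array}\right).
\]

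Next I would apply Lemma \ref{l2} with $B=(\lambda-m)I_n$ (it is enough to treat $\lambda\neq m$ since both sides are polynomials). The Schur complement collapses to a single $m\times m$ block
\[
C-FB^{-1}E=(\lambda-n-2r+4)I_m-R^{\top}R-\tfrac{n}{\lambda-m}J_m,
\]
which is a polynomial $P(R^{\top}R,J_m)$ with $P(u,v)=(\lambda-n-2r+4)-u-\tfrac{n}{\lambda-m}v$.

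Now I would invoke Lemma \ref{th3}(2) to read off the $m$ eigenvalues: $P(2r,m)=\lambda-n-4r+4-\tfrac{nm}{\lambda-m}$, the value $P(0,0)=\lambda-n-2r+4$ with multiplicity $m-n$, and $P(q_i,0)=\lambda-n-2r+4-q_i$ for $i=1,\dots,n-1$. Multiplying the product of these eigenvalues by $|B|=(\lambda-m)^n$ pulls one factor of $\lambda-m$ into the distinguished eigenvalue and yields exactly $[(\lambda-m)(\lambda-n-4r+4)-mn]$, leaving $(\lambda-m)^{n-1}$ outside; combined with the other two factors this gives the claimed formula.

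The only mildly delicate step is bookkeeping: making sure the $(\lambda-m)^n$ prefactor from $|B|$ is correctly distributed so that one copy clears the $\tfrac{nm}{\lambda-m}$ term inside the distinguished eigenvalue and the remaining $(\lambda-m)^{n-1}$ matches the stated exponent. Everything else reduces to the mechanical verifications $J_{mn}J_{nm}=nJ_m$ and the identification of $P(R^{\top}R,J_m)$, both already standard from the preliminary lemmas.
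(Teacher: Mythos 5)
Your proposal is correct and follows essentially the same route as the paper: the same block form of $\lambda I_{n+m}-Q(G^{0+1})$, elimination of the $(\lambda-m)I_n$ block via Lemma \ref{l2} using $J_{mn}J_{nm}=nJ_m$, and then Lemma \ref{th3}(2) to list the eigenvalues $P(2r,m)$, $P(0,0)$ (multiplicity $m-n$) and $P(q_i,0)$. The only cosmetic difference is that the paper multiplies the Schur complement through by $\lambda-m$ to work with the polynomial matrix $B=(\lambda-m)(\lambda-n-2r+4)I_m-(\lambda-m)R^{\top}R-nJ_m$ and a prefactor $(\lambda-m)^{n-m}$, whereas you keep the $\tfrac{n}{\lambda-m}J_m$ term and the prefactor $(\lambda-m)^{n}$; the bookkeeping is equivalent.
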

\begin{proof} Obviously,
\begin{eqnarray*}
A(G^{0+1})= \left(
\begin{array}{cccc}
0  &  J_{nm}\\
J_{mn} & A(G^l)
\end{array}
\right)=\left(
\begin{array}{cccc}
0  &  J_{nm}\\
J_{mn} & R^{\top}R-2I_m
\end{array}
\right)\end{eqnarray*}   and   \[ D(G^{0+1})= \left(
\begin{array}{cccc}
mI_n  &  0\\
0 & (n+2r-2)I_m
\end{array}
\right).
\]
Then
\begin{eqnarray*}
f(\lambda, G^{0+1})&=& \left|
\begin{array}{cccc}
(\lambda-m)I_n  &  -J_{nm}\\
     -J_{mn}      & (\lambda-n-2r+4)I_m-R^{\top}R
\end{array}
\right|.
\end{eqnarray*}
Clearly, it is sufficient to prove our claim for $\lambda\neq m$. By
Lemma  \ref{l2} (and the fact that $J_{mn}J_{nm}=nJ_m$),
 $f(\lambda, G^{0+1})=(\lambda-m)^{n-m}|B|, $ where
\[ B=(\lambda-m)(\lambda-n-2r+4)I_m-(\lambda-m)R^{\top}R-nJ_m.\]
By Lemma \ref{th3}, the eigenvalues of $B$ are
\begin{eqnarray*}
\sigma_m&=&(\lambda-m)(\lambda-n-2r+4)-2r(\lambda-m)-nm
\\&=&(\lambda-m)(\lambda-n-4r+4)-nm,
\end{eqnarray*}
 for $1\leq j\leq m-n$, \[ \sigma_j=(\lambda-m)(\lambda-n-2r+4),\]
 and for $m-n+1\leq j\leq m-1$
\begin{eqnarray*}
\sigma_i&=&(\lambda-m)(\lambda-n-2r+4)-(\lambda-m)q_j'\\
&=&(\lambda-m)(\lambda-n-2r+4)-(\lambda-m)q_{j-m+n}\\
&=&(\lambda-m)(\lambda-n-2r+4-q_{j-m+n}).
\end{eqnarray*}
 Then
\begin{eqnarray*}
|B|&=&[(\lambda-m)(\lambda-n-4r+4)-mn](\lambda-n-2r+4)^{m-n}(\lambda-m)^{m-1}
\\&&
\prod\limits_{i=1}^{n-1}(\lambda-n-2r+4-q_i),
\end{eqnarray*}
 and thus the result follows.
\end{proof}

Similarly, we have the following theorem.

\begin{theo}
\begin{eqnarray*}
f(\lambda,
G^{0-1})&=&[(\lambda-m)(\lambda-2m-n+4r-2)-mn](\lambda-m-n+2r-2)^{m-n}
\\ &&(\lambda-m)^{n-1}
\prod\limits_{i=1}^{n-1}(\lambda-m-n-2+2r+q_i).
\end{eqnarray*}
\end{theo}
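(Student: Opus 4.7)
The plan is to mimic the proof of Theorem 4.5 (the $G^{0+1}$ case) almost verbatim, replacing $A(G^l)$ by its complement $J_m-I_m-A(G^l)$ and adjusting degrees accordingly.

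First I would write down the adjacency and degree matrices. Since $G^{0-1}$ has no edges inside $V(G)$, the complement of the line graph inside $E(G)$, and a complete bipartite join between the two parts, we get
\[
A(G^{0-1})=\begin{pmatrix} 0 & J_{nm}\\ J_{mn} & J_m-I_m-A(G^l)\end{pmatrix}
=\begin{pmatrix} 0 & J_{nm}\\ J_{mn} & J_m+I_m-R^{\top}R\end{pmatrix},
\]
using $R^{\top}R=A(G^l)+2I_m$. Each vertex of $V(G)$ has degree $m$ (all bipartite neighbours), and each vertex of $E(G)$ has degree $n+(m-1-(2r-2))=n+m-2r+1$, so $D(G^{0-1})=\mathrm{diag}(mI_n,(n+m-2r+1)I_m)$. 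Consequently
\[
\lambda I_{n+m}-Q(G^{0-1})=\begin{pmatrix} (\lambda-m)I_n & -J_{nm}\\ -J_{mn} & (\lambda-m-n+2r-2)I_m-J_m+R^{\top}R\end{pmatrix}.
\]

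Next, for $\lambda\neq m$ the top-left block is invertible, so Lemma \ref{l2} gives $f(\lambda,G^{0-1})=(\lambda-m)^{n-m}|B|$, where (using $J_{mn}J_{nm}=nJ_m$)
\[
B=(\lambda-m)(\lambda-m-n+2r-2)I_m+(\lambda-m)R^{\top}R-(\lambda-m+n)J_m.
\]
This is exactly of the form $P(R^{\top}R,J_m)$ with $P(x,y)=(\lambda-m)(\lambda-m-n+2r-2)+(\lambda-m)x-(\lambda-m+n)y$, so Lemma \ref{th3}(2) applies. The eigenvalues of $B$ are then $\sigma_m=P(2r,m)$, $\sigma_j=P(0,0)=(\lambda-m)(\lambda-m-n+2r-2)$ for $1\le j\le m-n$, and $\sigma_j=P(q_{j-m+n},0)=(\lambda-m)(\lambda-m-n+2r-2+q_{j-m+n})$ for $m-n+1\le j\le m-1$.

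The only real calculation is checking that $P(2r,m)$ simplifies to the advertised factor $[(\lambda-m)(\lambda-2m-n+4r-2)-mn]$; expanding $(\lambda-m)(\lambda-m-n+2r-2+2r)-m(\lambda-m+n)$ and collecting terms gives precisely this. Multiplying all the eigenvalues of $B$ and dividing by $(\lambda-m)^{m-n}$ yields the claimed formula
\begin{eqnarray*}
f(\lambda,G^{0-1})&=&[(\lambda-m)(\lambda-2m-n+4r-2)-mn](\lambda-m-n+2r-2)^{m-n}\\
&&(\lambda-m)^{n-1}\prod_{i=1}^{n-1}(\lambda-m-n-2+2r+q_i),
\end{eqnarray*}
and the identity extends to $\lambda=m$ by continuity. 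The only potential obstacle is a bookkeeping slip in the constant and linear terms of $P(2r,m)$; otherwise the argument is a straightforward adaptation of Theorem 4.5.
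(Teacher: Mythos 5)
Your proof is correct: the block matrices, the degree count $n+m-2r+1$ for the edge-vertices, the reduction to $B=(\lambda-m)(\lambda-m-n+2r-2)I_m+(\lambda-m)R^{\top}R-(\lambda-m+n)J_m$, and the evaluation $P(2r,m)=(\lambda-m)(\lambda-2m-n+4r-2)-mn$ all check out. The paper proves only the $G^{0+1}$ case and dismisses $G^{0-1}$ with ``similarly,'' and your argument is exactly the intended adaptation, so there is nothing further to add.
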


\begin{theo}
\begin{eqnarray*}
 f(\lambda,
G^{1+1})&=&[(\lambda-2n-m+2)(\lambda-n-4r+4)-mn](\lambda-n-2r+4)^{m-n}\\&&(\lambda-n-m+2)^{n-1}
\prod\limits_{i=1}^{n-1}(\lambda-n-2r+4-q_i).
 \end{eqnarray*}
\end{theo}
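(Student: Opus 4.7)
The plan is to mimic the block-matrix, row-operation, Schur-complement strategy used for $G^{+11}$ in Theorem~4.2, but with the roles reversed: the row operation will eliminate the $J_n$ in the top-left block rather than the $J_m$ in the bottom-right. I would first observe that each vertex of $V(G)$ has degree $(n-1)+m$ in $G^{1+1}$ while each vertex of $E(G)$ has degree $(2r-2)+n$, so using $R^\top R = A(G^l) + 2I_m$ we obtain
\[
M := \lambda I_{n+m} - Q(G^{1+1}) = \left(\begin{array}{cc}(\lambda-n-m+2)I_n - J_n & -J_{nm}\\ -J_{mn} & (\lambda-n-2r+4)I_m - R^\top R\end{array}\right).
\]

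Next I would multiply the second block row of $M$ on the left by $-\tfrac{1}{m}J_{nm}$ and add the result to the first block row; this elementary operation has determinant one, so $\det M$ is preserved. The identity $J_{nm}J_{mn}=mJ_n$ cancels the $-J_n$ in position $(1,1)$, leaving $(\lambda-n-m+2)I_n$. Using $J_{nm}R^\top = rJ_n$ and $J_nR = 2J_{nm}$ (so that $J_{nm}R^\top R = 2rJ_{nm}$), the $(1,2)$-block collapses to $-\tfrac{\lambda-n+m-4r+4}{m}J_{nm}$.

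Assuming $\lambda\neq n+m-2$ so that the now diagonal top-left block $(\lambda-n-m+2)I_n$ is invertible, I would apply Lemma~\ref{l2}. Using $J_{mn}J_{nm}=nJ_m$, the Schur complement onto the bottom block works out to
\[
(\lambda-n-2r+4)I_m - R^\top R - \frac{n(\lambda-n+m-4r+4)}{m(\lambda-n-m+2)}J_m,
\]
which has the form $P(R^\top R, J_m)$ for an obvious polynomial $P$. Lemma~\ref{th3}(2) then delivers its eigenvalues immediately: one copy of $P(2r,m)$, $m-n$ copies of $\lambda-n-2r+4$, and $\lambda-n-2r+4-q_i$ for $i=1,\dots,n-1$.

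The main obstacle is the final algebraic cleanup. After multiplying $(\lambda-n-m+2)^n$ by the Schur-complement determinant, one needs the identity
\[
(\lambda-n-m+2)\cdot P(2r,m) = (\lambda-2n-m+2)(\lambda-n-4r+4) - nm.
\]
Writing $\xi = \lambda-n-4r+4$ so that $\lambda-n+m-4r+4 = \xi+m$, the left side becomes $\xi(\lambda-n-m+2) - n(\xi+m) = \xi(\lambda-2n-m+2) - nm$, matching the right. The stated product formula then follows, and since both sides are polynomials in $\lambda$, the genericity assumption $\lambda\neq n+m-2$ may be dropped at the end.
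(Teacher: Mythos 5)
Your proposal is correct and follows essentially the same route as the paper's proof: a single block row operation, then Lemma \ref{l2}, then Lemma \ref{th3}(2) applied to a matrix of the form $\alpha I_m-\beta R^{\top}R-\gamma J_m$. The only difference is cosmetic: you clear the $J_n$ from the top-left block by adding $-\tfrac{1}{m}J_{nm}$ times the second block row to the first (so your genericity condition is $\lambda\neq n+m-2$), whereas the paper zeroes the bottom-left block by adding $\tfrac{1}{\lambda-2n-m+2}J_{mn}$ times the first block row to the second (condition $\lambda\neq m+2n-2$); both reductions, and your final algebraic identity $(\lambda-n-m+2)P(2r,m)=(\lambda-2n-m+2)(\lambda-n-4r+4)-nm$, check out.
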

\begin{proof}  
Obviously,
\begin{eqnarray*}
A(G^{1+1})= \left(
\begin{array}{cccc}
J_n-I_n  &  J_{nm}\\
J_{mn} & R^{\top}R-2I_m
\end{array}
\right)
\end{eqnarray*}
and
\begin{eqnarray*}
 D(G^{1+1})= \left(
\begin{array}{cccc}
(m+n-1)I_n  &  0\\
0 & (n+2r-2)I_m
\end{array}
\right).\end{eqnarray*}
Then
\begin{eqnarray*}
f(\lambda, G^{1+1})&=& \left|
\begin{array}{cccc}
(\lambda-m-n+2)I_n-J_n  &  -J_{nm}\\
     -J_{mn}      & (\lambda-n-2r+4)I_m-R^{\top}R
\end{array}
\right|.
\end{eqnarray*}
Clearly, it is sufficient to prove our claim for $\lambda\neq
m+2n-2$. Let
\begin{eqnarray*}
M=\left(
\begin{array}{cccc}
(\lambda-m-n+2)I_n-J_n  &  -J_{nm}\\
     -J_{mn}      & (\lambda-n-2r+4)I_m-R^{\top}R
\end{array}
\right).
\end{eqnarray*}
Multiplying the first row of the block
matrix $M$ by $\frac{1}{\lambda-2n-m+2}J_{mn}$ and adding the result
to the second row of $M$, we obtain a new matrix
 \begin{eqnarray*}
M_1= \left(
\begin{array}{cccc}
(\lambda-m-n+2)I_n-J_n  &  -J_{nm}\\
    0     & \frac{-n}{\lambda-2n-m+2}J_m+(\lambda-n-2r+4)I_m-R^{\top}R
\end{array}
\right), \end{eqnarray*}
Then
\begin{eqnarray*}
f(\lambda, G^{+11})&=&|M_1|\\
&=&|(\lambda-m-n+2)I_n-J_n|\\
&&\cdot
\left|(\lambda-n-2r+4)I_m-R^{\top}R+\frac{-n}{\lambda-2n-m+2}J_m\right|
\\&=&(\lambda-n-m+2)^{n-1}(\lambda-2n-m+2)^{1-m}|B|,
\end{eqnarray*} where
\[B=(\lambda-2n-m+2)(\lambda-n-2r+4)I_m-(\lambda-2n-m+2)R^{\top}R-nJ_m.\]
By Lemma \ref{th3}, the
eigenvalues of $B$ are
\begin{eqnarray*}
\sigma_m &=&(\lambda-2n-m+2)(\lambda-n-2r+4)-(\lambda-2n-m+2)\cdot
2r-mn
\\&=&(\lambda-2n-m+2)(\lambda-n-4r+4)-mn
\end{eqnarray*}
for $1\leq j\leq m-n$,
\begin{eqnarray*}
\sigma_j=(\lambda-2n-m+2)(\lambda-n-2r+4)
\end{eqnarray*}
 and for $m-n+1\leq j\leq m-1$,
\begin{eqnarray*}
\sigma_j&=&(\lambda-2n-m+2)(\lambda-n-2r+4)-(\lambda-2n-m+2)q_j'\\
&=& (\lambda-2n-m+2)(\lambda-n-2r+4)-(\lambda-2n-m+2)q_{j-m+n}
\\&=&(\lambda-2n-m+2)(\lambda-n-2r+4-q_{j-m+n}).
\end{eqnarray*}
 Then
\begin{eqnarray*}
|B|&=&[(\lambda-2n-m+2)(\lambda-n-4r+4)-mn](\lambda-n-2r+4)^{m-n}\\&&(\lambda-2n-m+2)^{m-1}
\prod\limits_{i=1}^{n-1}(\lambda-n-2r+4-q_i),
\end{eqnarray*}
 and thus the result follows.
\end{proof}

Similarly, we have the following theorem.

\begin{theo} 
\begin{eqnarray*}
f(\lambda,
G^{++1})&=&[(\lambda-2r-m)(\lambda-n-4r+4)-mn](\lambda-n-2r+4)^{m-n}\\&&
\prod\limits_{i=1}^{n-1}(\lambda-n-2r+4-q_i)(\lambda-m-q_i),\\
 f(\lambda,
G^{-+1})&=&[(\lambda-2n-m+2r+2)(\lambda-n-4r+4)-mn]\\
&&(\lambda-n-2r+4)^{m-n}\\&&
\prod\limits_{i=1}^{n-1}(\lambda-n-2r+4-q_i)(\lambda-n-m+2+q_i),\\
f(\lambda,
G^{1-1})&=&[(\lambda-2n-m+2)(\lambda-2m-n+4r-2)-mn]\\
&&(\lambda-m-n-2+2r)^{m-n}\\&&(\lambda-n-m+2)^{n-1}
\prod\limits_{i=1}^{n-1}(\lambda-m-n-2+2r+q_i),\\
f(\lambda,
G^{+-1})&=&[(\lambda-2r-m)(\lambda-2m-n+4r-2)-mn]\\
&&(\lambda-m-n-2+2r)^{m-n}\\&&
\prod\limits_{i=1}^{n-1}(\lambda-m-q_i)(\lambda-m-n-2+2r+q_i),\\
f(\lambda,
G^{--1})&&=[(\lambda-2n-m+2r+2)(\lambda-2m-n+4r-2)-mn]\\&&(\lambda-m-n+2r-2)^{m-n}\\
&&\prod\limits_{i=1}^{n-1}(\lambda-m-n+2+q_i)(\lambda-n-m+2r-2+q_i).
\end{eqnarray*}
\end{theo}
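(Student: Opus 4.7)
The plan is to prove each of the five formulas by the same block-matrix reduction that handled $G^{+11}$ and $G^{1+1}$, since every $G^{xyz}$ with $z=1$ has
\begin{equation*}
A(G^{xyz}) = \begin{pmatrix} A(G^x) & J_{nm} \\ J_{mn} & A((G^l)^y) \end{pmatrix},
\end{equation*}
and a diagonal degree matrix whose two scalar blocks are determined by the $x$- and $y$-parts together with the bipartite contributions $m$ and $n$. The ingredients I need are $A(G^+) = A$, $A(G^-) = J_n - I_n - A$, $A((G^l)^+) = R^{\top}R - 2I_m$, $A((G^l)^-) = J_m + I_m - R^{\top}R$, with corresponding regular degrees $r$, $n-1-r$, $2r-2$, $m-2r+1$. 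Assembling these gives $\lambda I - Q(G^{xyz})$ as a $2\times 2$ block whose entries are polynomials in $A$, $R^{\top}R$, $J_n$, $J_m$, and $J_{nm}$.

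For each case I would first clear one off-diagonal $J$ block by a block row operation. Two operations suffice. Left-multiplying the top block row by $-\tfrac12 R^{\top}$ kills $J_{mn}$ via $R^{\top}J_{nm} = 2J_m$; this is the right tool whenever the top-left block is affine in $A$. Alternatively, left-multiplying the top block row by a scalar multiple of $J_{mn}$ kills $J_{mn}$ via $J_{mn}J_{nm} = nJ_m$ and $J_nJ_{nm} = nJ_{nm}$; this is the right tool whenever the top-left block is affine in $I_n$ and $J_n$. The first suits $G^{++1}$ and $G^{+-1}$; the second suits $G^{1-1}$; for $G^{-+1}$ and $G^{--1}$ either works, since $J_{nm}$ is an eigenvector of both $A$ (with eigenvalue $r$) and $J_n$ (with eigenvalue $n$).

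Once the matrix is made block upper-triangular, Lemma \ref{l2} reduces the determinant to a single $m \times m$ block of the form $P(R^{\top}R, J_m)$ (or an $n \times n$ block $P(A, J_n)$), and Lemma \ref{th3} immediately produces its eigenvalues: one exceptional value $\sigma_m = P(2r, m)$ (resp.\ $\sigma_n = P(r, n)$) from the all-ones direction, together with $P(q_i', 0)$ (resp.\ $P(q_i - r, 0)$) from the remaining orthogonal eigenvectors. Multiplying these eigenvalues and restoring the scalar prefactor coming from the eliminated block produces a product of linear factors in the $q_i$ plus an isolated quadratic factor; routine simplification using $2m = rn$ then matches the stated formulas.

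The step I expect to be most delicate is $G^{--1}$, where both $A(G^{-})$ and $A((G^l)^-)$ contribute $J$ terms: after the row operation the reduced block carries $J_m$ with a coefficient assembled from the bipartite $J_{nm}J_{mn}$ contribution and from the $J_m$ already present in $A((G^l)^-)$, and the isolated eigenvalue involves the product of three affine expressions in $\lambda$ before it collapses to the claimed quadratic $(\lambda - 2n - m + 2r + 2)(\lambda - 2m - n + 4r - 2) - mn$. Tracking the multiplicities of $(\lambda - m - n + 2r - 2)^{m-n}$ and of the two products over $i$ against the orthogonal-eigenvector count from Lemma \ref{th3}(2) is the bookkeeping I would take most care with.
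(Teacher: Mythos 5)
Your overall strategy --- write $\lambda I_{n+m}-Q(G^{xy1})$ as a $2\times2$ block matrix, clear one block by a block row operation, then apply Lemma \ref{l2} followed by Lemma \ref{th3} --- is exactly what the paper intends here (it offers no proof of this theorem, only ``similarly''), and your block decomposition, regular degrees, and final bookkeeping with $2m=rn$ are all correct. However, you have misdescribed what the two row operations do, and consequently assigned the wrong one to $G^{++1}$ and $G^{+-1}$. Adding $-\tfrac12R^{\top}$ times the top block row to the bottom row does \emph{not} kill $J_{mn}$: the lower-left block becomes the messier $-J_{mn}+\tfrac12R^{\top}A-\tfrac{\lambda-m-r}{2}R^{\top}$. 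What that operation actually accomplishes (as in the paper's proof for $G^{+11}$) is to cancel the $-J_m$ sitting in the \emph{lower-right} block, via $-\tfrac12R^{\top}\cdot(-J_{nm})=J_m$, so that the lower-right block becomes a scalar matrix; Lemma \ref{l2} is then applied with that block as the invertible $C$, and the $n\times n$ Schur complement is a polynomial in $Q$ and $J_n$ handled by Lemma \ref{th3}(1). It is the \emph{other} operation --- adding $\tfrac{1}{c}J_{mn}$ times the top row to the bottom row, $c$ being the constant row sum of the upper-left block --- that annihilates the lower-left $-J_{mn}$ and makes the matrix block triangular.

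This matters for $G^{++1}$ and $G^{+-1}$: there the lower-right block contains $R^{\top}R$, so the $R^{\top}$ operation cannot reduce it to a scalar matrix (it would only introduce an extra $+J_m$), and your plan as written stalls. The correct tool for those two cases is precisely the $J_{mn}$ operation you reserve for $G^{1-1}$: the upper-left block $(\lambda-m-r)I_n-A$ has constant row sums $\lambda-m-2r$ and determinant $\prod_{i=1}^{n}(\lambda-m-q_i)$, so clearing the lower-left block costs a correction term $-\tfrac{n}{\lambda-m-2r}J_m$ in the lower-right, which is then a polynomial in $R^{\top}R$ and $J_m$ handled by Lemma \ref{th3}(2); the factor $\lambda-m-q_n=\lambda-m-2r$ recombines with the exceptional eigenvalue to give the quadratic $(\lambda-2r-m)(\lambda-n-4r+4)-mn$. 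With the operations reassigned in this way, your argument goes through in all five cases and reproduces the stated formulas.
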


\section{Signless Laplacian characteristic polynomials of $G^{xyz}$ with $z=+$}

 Since $G^{00+}$
 is a bipartite graph, the signless Laplacian eigenvalues of  $G^{00+}$ are the
 same as the Laplacian eigenvalues of $G^{00+}$. Thus
 \[f(\lambda,
G^{00+})=\lambda(\lambda-r-2)(\lambda-2)^{m-n}\prod
\limits_{i=1}^{n-1}[(\lambda-2)(\lambda-r)-q_i].\]

\begin{theo}
\[ f(\lambda,
G^{10+})=[\lambda^2-(r+2n)\lambda+4n-4](\lambda-2)^{m-n}\prod\limits_{i=1}^{n-1}[(\lambda-r-n+2)(\lambda-2)-q_i].\]
\end{theo}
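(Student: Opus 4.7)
The plan is to set up the signless Laplacian $Q(G^{10+})$ as a block matrix and then reduce the characteristic determinant via Lemma~\ref{l2} and Lemma~\ref{th3}(1), exactly in the spirit of the proof of the $G^{+11}$ theorem above.

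First I would write
\[
A(G^{10+})=\begin{pmatrix} J_n-I_n & R \\ R^{\top} & 0 \end{pmatrix},\qquad
D(G^{10+})=\begin{pmatrix} (n-1+r)I_n & 0 \\ 0 & 2I_m \end{pmatrix},
\]
since each original vertex has degree $(n-1)+r$ in $G^{10+}$ and each edge-vertex has degree $2$, and the edge-vertices induce no edges (because $y=0$). Adding gives
\[
\lambda I_{n+m}-Q(G^{10+})=\begin{pmatrix} (\lambda-n-r+2)I_n-J_n & -R \\ -R^{\top} & (\lambda-2)I_m \end{pmatrix}.
\]

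Next, assuming $\lambda\neq 2$ (which suffices since both sides are polynomials), I would apply Lemma~\ref{l2} with the $(2,2)$-block invertible, using $RR^{\top}=Q$, to obtain
\[
f(\lambda,G^{10+})=(\lambda-2)^{m-n}|B|,
\]
where
\[
B=(\lambda-2)(\lambda-n-r+2)I_n-(\lambda-2)J_n-Q.
\]

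Now $B=P(Q,J_n)$ for the polynomial $P(x,y)=(\lambda-2)(\lambda-n-r+2)-(\lambda-2)y-x$, so Lemma~\ref{th3}(1) supplies its eigenvalues immediately: one eigenvalue $P(2r,n)$ and the eigenvalues $P(q_i,0)$ for $i=1,\dots,n-1$. A short expansion gives
\[
P(2r,n)=(\lambda-2)(\lambda-2n-r+2)-2r=\lambda^{2}-(r+2n)\lambda+4n-4,
\]
while $P(q_i,0)=(\lambda-r-n+2)(\lambda-2)-q_i$. Multiplying these together and by the prefactor $(\lambda-2)^{m-n}$ produces the claimed formula. The only genuinely delicate point is the algebraic simplification of $P(2r,n)$, where the $-2r$ contribution from $Q$ cancels against the shifted constant term; everything else is routine block manipulation of the type carried out in the preceding theorems.
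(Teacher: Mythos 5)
Your proposal is correct and follows essentially the same route as the paper: the same block form of $Q(G^{10+})$, the same reduction via Lemma \ref{l2} using the invertible $(\lambda-2)I_m$ block and $RR^{\top}=Q$, and the same application of Lemma \ref{th3}(1) to the matrix $B=(\lambda-2)(\lambda-r-n+2)I_n-(\lambda-2)J_n-Q$. The eigenvalue computations $P(2r,n)=\lambda^2-(r+2n)\lambda+4n-4$ and $P(q_i,0)=(\lambda-r-n+2)(\lambda-2)-q_i$ match the paper's exactly.
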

\begin{proof} Obviously,
\[
A(G^{10+})= \left(
\begin{array}{cccc}
J_n-I_n  &  R\\
R^{\top} & 0
\end{array}
\right)  \quad and  \quad  D(G^{10+})= \left(
\begin{array}{cccc}
(r+n-1)I_n  &  0\\
0 & 2I_m
\end{array}
\right).
\]
Then
\[
f(\lambda, G^{10+})= \left|
\begin{array}{cccc}
(\lambda-r-n+2)I_n-J_n  &  -R\\
    -R^{\top}      & (\lambda-2)I_m
\end{array}
\right|.
\]
Clearly, it is sufficient to prove our claim for $\lambda\neq 2$.
By Lemma \ref{l2} (and the fact $RR^{\top}=Q$),
\begin{eqnarray*}
f(\lambda, G^{10+})&=& |(\lambda-2)I_m|\cdot
\left|(\lambda-r-n+2)I_n-J_n-\frac{1}{\lambda-2}RI_mR^{\top}\right|\\
&=&(\lambda-2)^{m-n}\cdot
|(\lambda-r-n+2)(\lambda-2)I_n-(\lambda-2)J_n-Q|.
\end{eqnarray*}
Let $B=(\lambda-r-n+2)(\lambda-2)I_n-(\lambda-2)J_n-Q$.  By Lemma
\ref{th3}, the eigenvalues of $B$ are
\[\sigma_n=(\lambda-r-n+2)(\lambda-2)-(\lambda-2)n-2r=\lambda^2-(r+2n)\lambda+4n-4\]
and for $1\leq i\leq n-1$,
\[\sigma_i=(\lambda-r-n+2)(\lambda-2)-q_i.\]
Then
\[
|B|=[\lambda^2-(r+2n)\lambda+4n-4]\prod\limits_{i=1}^{n-1}[(\lambda-r-n+2)(\lambda-2)-q_i],\]
 and thus the result follows.
\end{proof}

Similarly, we have the following theorem.

\begin{theo} 
\begin{eqnarray*}
f(\lambda,
G^{+0+})&=&[\lambda^2-(2+3r)\lambda+4r](\lambda-2)^{m-n}\prod\limits_{i=1}^{n-1}[(\lambda-2)(\lambda-r-q_i)-q_i],\\
f(\lambda,
G^{-0+})&=&[(\lambda-2)(\lambda-2n+r+2)-2r](\lambda-2)^{m-n}\\
&&\prod\limits_{i=1}^{n-1}[(\lambda-2)(\lambda-n-r+2+q_i)-q_i].
\end{eqnarray*}
\end{theo}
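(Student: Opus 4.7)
The plan is to imitate, verbatim, the block-matrix / Schur-complement template used above for $G^{10+}$, once for $x=+$ and once for $x=-$. In both cases $y=0$ and $z=+$, so an edge-vertex has no neighbours on the edge side and exactly two neighbours on the vertex side; hence the bottom-right block of $\lambda I_{n+m}-Q$ is $(\lambda-2)I_m$, and Lemma \ref{l2} (with this block as $C$) reduces the problem to computing one $n\times n$ determinant. The incidence identity $RR^{\top}=Q=A+rI_n$ then collapses $\frac{1}{\lambda-2}RR^{\top}$ to a polynomial in $A$ alone, so the resulting $n\times n$ matrix can be read off as a polynomial in $A$ and $J_n$, and Lemma \ref{th3}(1) delivers its eigenvalues.

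For $G^{+0+}$, I first observe that the vertex-block of $Q$ is $2rI_n+A$; carrying out the above reduction and pulling out a factor of $\lambda-2$ leaves
\[
f(\lambda,G^{+0+})=(\lambda-2)^{m-n}\,\det\bigl([(\lambda-2)(\lambda-2r)-r]I_n-(\lambda-1)A\bigr).
\]
Since no $J_n$ term appears, Lemma \ref{th3}(1) gives $\sigma_n=\lambda^2-(3r+2)\lambda+4r$ on the constant eigenvector and $\sigma_i=(\lambda-2)(\lambda-r-q_i)-q_i$ on the orthogonal eigenvectors, and multiplying produces the claim.

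For $G^{-0+}$, each original vertex has degree $(n-1-r)+r=n-1$, so the vertex-block of $Q$ is $(n-2)I_n+J_n-A$; the same Schur-complement manipulation leaves
\[
f(\lambda,G^{-0+})=(\lambda-2)^{m-n}\,\det\bigl([(\lambda-2)(\lambda-n+2)-r]I_n-(\lambda-2)J_n+(\lambda-3)A\bigr),
\]
which does contain a $J_n$ term. Lemma \ref{th3}(1) then splits into two cases: on $J_{n1}$ the eigenvalues of $A$ and $J_n$ are $r$ and $n$, giving (after simplification) $\sigma_n=(\lambda-2)(\lambda-2n+r+2)-2r$; on the remaining orthogonal eigenvectors $J_n$ acts as zero and $A$ as $q_i-r$, giving $\sigma_i=(\lambda-2)(\lambda-n-r+2+q_i)-q_i$.

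The only obstacle is routine algebraic bookkeeping, most visibly in the $G^{-0+}$ case: one must check that the raw expression $(\lambda-2)(\lambda-n+2)-r-(\lambda-2)n+(\lambda-3)r$ collapses to the compact form $(\lambda-2)(\lambda-2n+r+2)-2r$ stated in the theorem, and similarly that $(\lambda-2)(\lambda-n+2)-r+(\lambda-3)(q_i-r)$ simplifies to $(\lambda-2)(\lambda-n-r+2+q_i)-q_i$. No new idea beyond the template of the preceding proofs is required.
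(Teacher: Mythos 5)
Your proposal is correct and follows exactly the route the paper intends (the theorem is stated as ``similarly'' after the $G^{10+}$ proof): form $Q(G^{x0+})$ blockwise, take the Schur complement against the invertible block $(\lambda-2)I_m$ via Lemma \ref{l2}, substitute $RR^{\top}=Q=A+rI_n$, and read off the eigenvalues with Lemma \ref{th3}(1). The reduced $n\times n$ matrices and all the eigenvalue simplifications you list check out, so nothing is missing.
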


\begin{theo}
\[  f(\lambda,
G^{01+})=[(\lambda-r)(\lambda-2m)-2r](\lambda-m)^{m-n}\prod\limits_{i=1}^{n-1}[(\lambda-r)(\lambda-m)-q_i].\]
\end{theo}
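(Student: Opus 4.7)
The plan is to mimic the preceding $G^{0+1}$ and $G^{1+1}$ arguments and reduce to Lemma~\ref{th3}(2) via a block Schur complement. In $G^{01+}$ the vertex-vertices carry no internal edges, the edge-vertices span $K_m$, and each vertex is joined to its $r$ incident edge-vertices; hence the vertex-degrees are all $r$ and the edge-degrees are all $(m-1)+2=m+1$. Reading off $A(G^{01+})$ and $D(G^{01+})$ in block form and subtracting, I obtain
\begin{eqnarray*}
\lambda I_{n+m}-Q(G^{01+})=\left(
\begin{array}{cc}
(\lambda-r)I_n & -R\\
-R^{\top} & (\lambda-m)I_m-J_m
\end{array}
\right).
\end{eqnarray*}

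For $\lambda\neq r$ the top-left block is invertible, so the first case of Lemma~\ref{l2} yields
\begin{eqnarray*}
f(\lambda,G^{01+})=(\lambda-r)^n\det\left[(\lambda-m)I_m-J_m-\tfrac{1}{\lambda-r}R^{\top}R\right].
\end{eqnarray*}
Multiplying each of the $m$ rows inside the determinant by $(\lambda-r)$ clears the fraction at the cost of a factor $(\lambda-r)^m$, giving $f(\lambda,G^{01+})=(\lambda-r)^{n-m}\det[B]$, where
\begin{eqnarray*}
B=(\lambda-r)(\lambda-m)I_m-(\lambda-r)J_m-R^{\top}R.
\end{eqnarray*}
I would then apply Lemma~\ref{th3}(2) with $P(x,y)=(\lambda-r)(\lambda-m)-(\lambda-r)y-x$: the distinguished eigenvalue is $P(2r,m)=(\lambda-r)(\lambda-2m)-2r$, the value $P(0,0)=(\lambda-r)(\lambda-m)$ contributes with multiplicity $m-n$, and $P(q_i,0)=(\lambda-r)(\lambda-m)-q_i$ contributes once for each $i=1,\dots,n-1$.

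Multiplying these eigenvalues together and absorbing the prefactor $(\lambda-r)^{n-m}$ against the $(\lambda-r)^{m-n}$ coming from the repeated factor $[(\lambda-r)(\lambda-m)]^{m-n}$ leaves exactly $[(\lambda-r)(\lambda-2m)-2r](\lambda-m)^{m-n}\prod_{i=1}^{n-1}[(\lambda-r)(\lambda-m)-q_i]$, as claimed. I anticipate no serious obstacle: the whole argument follows the same pattern as the $G^{0+1}$ and $G^{1+1}$ cases proved earlier. The only genuine bookkeeping points are (i) aligning the $q_j'$ convention of Lemma~\ref{th3}(2), in which $m-n$ of the $q_j'$ equal $0$ and the remaining $n-1$ are $q_1,\dots,q_{n-1}$, with the final exponent $(\lambda-m)^{m-n}$, and (ii) observing that the temporary exclusion $\lambda\neq r$ is harmless because both sides are polynomials in $\lambda$.
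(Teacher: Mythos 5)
Your proof is correct, but it reduces the determinant in the opposite direction from the paper. The paper first performs a block row operation (adding $-\tfrac12 J_{mn}$ times the first block row to the second) to annihilate the $J_m$ in the lower-right block, then takes the Schur complement with respect to that block; this leaves an $n\times n$ matrix $(\lambda-r)(\lambda-m)I_n+\tfrac{(r-\lambda)r}{2}J_n-Q$ whose spectrum is read off from Lemma~\ref{th3}(1), with the factor $(\lambda-m)^{m-n}$ appearing as an explicit prefactor. You instead take the Schur complement with respect to the already-diagonal upper-left block $(\lambda-r)I_n$, landing on the $m\times m$ matrix $(\lambda-r)(\lambda-m)I_m-(\lambda-r)J_m-R^{\top}R$ and invoking Lemma~\ref{th3}(2); here $(\lambda-m)^{m-n}$ emerges from the $m-n$ eigenvalues with $q_i'=0$, after the accompanying $(\lambda-r)^{m-n}$ cancels your prefactor $(\lambda-r)^{n-m}$. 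Your route is the one the paper itself uses for $G^{0+1}$ and is arguably cleaner here, since it needs no preliminary row operation and no identity for $J_{mn}R$; the paper's route keeps the working matrix at the smaller size $n$ and uses the simpler part (1) of the spectral lemma. All of your intermediate computations (the degrees $r$ and $m+1$, the distinguished eigenvalue $P(2r,m)=(\lambda-r)(\lambda-2m)-2r$, and the bookkeeping of the $q_i'$) check out, and the polynomial-identity argument disposes of the temporary restriction $\lambda\neq r$ just as the paper's disposes of $\lambda\neq m$.
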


\begin{proof} Obviously,
\[
A(G^{01+})= \left(
\begin{array}{cccc}
0  &  R\\
R^{\top} & J_m-I_m
\end{array}
\right)\quad and   \quad   D(G^{01+})= \left(
\begin{array}{cccc}
rI_n  &  0\\
0 & (m+1)I_m
\end{array}
\right).
\]
Then
\[
\begin{array}{lll}
f(\lambda, G^{01+})&=& \left|
\begin{array}{cccc}
(\lambda-r)I_n &  -R\\
     -R^{\top}      & (\lambda-m)I_m-J_m
\end{array}
\right|.
\end{array}
\]
Let \[ M=\left(
\begin{array}{cccc}
(\lambda-r)I_n &  -R\\
     -R^{\top}      & (\lambda-m)I_m-J_m
\end{array}
\right).
\]
 Multiplying the first row of the block matrix $M$ by
$-\frac{1}{2}J_{mn}$ and adding the result to the second row of $M$,
we obtain a new matrix
 \[
M_1=\left(
\begin{array}{cccc}
(\lambda-r)I_n &  -R\\
\frac{r-\lambda}{2}J_{mn}-R^{\top}     & (\lambda-m)I_m
\end{array}
\right).
\]
Obviously, $f(\lambda, G^{01+})=|M|=|M_1|$, and it is sufficient
to prove our claim for $\lambda\neq m$. By Lemma \ref{l2},
\begin{eqnarray*}
f(\lambda, G^{01+})=(\lambda-m)^{m-n}
\left|(\lambda-r)(\lambda-m)I_n+\frac{r-\lambda}{2}RJ_{mn}-RR^{\top}\right|.
\end{eqnarray*}
Let
$B=(\lambda-r)(\lambda-m)I_n+\frac{r-\lambda}{2}RJ_{mn}-RR^{\top}$.
Obviously, $RJ_{mn}=rJ_n$. Thus
\[
B=(\lambda-r)(\lambda-m)I_n+\frac{r-\lambda}{2}rJ_n-Q.\]
By Lemma \ref{th3} (and the fact that $rn=2m$),
the eigenvalues of $B$ are
\[\sigma_n=(\lambda-r)(\lambda-m)+\frac{r-\lambda}{2}rn-2r=(\lambda-r)(\lambda-2m)-2r\]
and for  $1\leq i\leq n-1$,
\[
\sigma_i=(\lambda-r)(\lambda-m)-q_i.
\]
Then \[  |B|=[(\lambda-r)(\lambda-2m)-2r]\prod\limits_{i=1}^{n-1}[(\lambda-r)(\lambda-m)-q_i],\]
 and thus the result follows.
\end{proof}

Similarly, we have the following theorem.

\begin{theo}
\begin{eqnarray*}
 f(\lambda,
G^{11+})&=&[(\lambda-r-2n+2)(\lambda-2m)-2r](\lambda-m)^{m-n}\\
&&\prod\limits_{i=1}^{n-1}[(\lambda-r-n+2)(\lambda-m)-q_i],\\
  f(\lambda,
G^{-1+})&=&[(\lambda-2m)(\lambda-2n+r+2)-2r](\lambda-m)^{m-n}\\
&&\prod\limits_{i=1}^{n-1}[(\lambda-m)(\lambda-n+r+2-q_i)-q_i],\\
 f(\lambda,
G^{+1+})&=&[(\lambda-2m)(\lambda-3r)-2r](\lambda-m)^{m-n}\\
&&\prod\limits_{i=1}^{n-1}[(\lambda-m)(\lambda-r-q_i)-q_i].
\end{eqnarray*}
\end{theo}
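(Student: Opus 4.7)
The plan is to mimic the proof of Theorem 4.3 ($f(\lambda,G^{01+})$) three times, once for each of $G^{11+}$, $G^{-1+}$, and $G^{+1+}$. In all three graphs the edge--part carries the complete line--graph $(G^l)^1$ and the incidence is the ordinary $B(G)$, so the $(2,2)$--block of $\lambda I_{n+m}-Q$ is always $(\lambda-m)I_m-J_m$ and the $(1,2),(2,1)$ blocks are $-R,-R^{\top}$. The only thing that changes is the upper--left block, which reflects the choice of $x\in\{1,-,+\}$: it equals $(\lambda-n+2-r)I_n-J_n$, $(\lambda-n+2)I_n-J_n+A$, and $(\lambda-2r)I_n-A$, respectively. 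These are read off from the obvious block decompositions of $A(G^{xy+})$ and $D(G^{xy+})$ exactly as in Theorem 4.3.

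In each case I would first apply the same row operation as in the proof of Theorem 4.3: multiply the block first row of $M=\lambda I_{n+m}-Q(G^{xy+})$ on the left by $-\tfrac{1}{2}J_{mn}$ and add it to the block second row. Using $J_{mn}R=2J_m$, this erases the $-J_m$ in the $(2,2)$--block and leaves $(\lambda-m)I_m$ there, producing a new matrix $M_1$ whose bottom--right block is invertible for $\lambda\neq m$. Lemma~\ref{l2} then reduces $|M_1|$ to $(\lambda-m)^{m-n}|B|$, where $B$ is an $n\times n$ matrix whose structure I compute using $RR^{\top}=Q$ and $RJ_{mn}=rJ_n$. Concretely, $B$ will be of the form $\alpha(\lambda)\,I_n-\beta(\lambda)\,J_n-Q$ in the cases $x=1$, $x=-$ (with an extra $+A$ term absorbed into $-Q$ by combining with $I_n$ in the $x=-$ case), and of the form $\alpha(\lambda)\,I_n-\beta(\lambda)\,J_n-\gamma(\lambda)\,A$ in the case $x=+$.

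Once $B=P(Q,J_n)$ (or $P(A,J_n)$) is isolated, Lemma~\ref{th3}(1) gives its spectrum immediately: the $J_{n1}$ eigenvalue contributes the bracketed quadratic factor $[(\lambda-2m)(\cdots)-2r]$, and the other $n-1$ eigenvectors contribute the products $\prod_{i=1}^{n-1}[\cdots - q_i]$. The simplification of the $J_{n1}$--eigenvalue into the claimed quadratic is the one place where a small computation is needed; the key identity making the constants match the statement is $rn=2m$, which collapses terms like $(\lambda-m)-\tfrac{rn}{2}$ into $\lambda-2m$, exactly as it did in Theorem~\ref{th3}'s application in the proof of $f(\lambda,G^{01+})$.

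The proof is therefore entirely mechanical once the setup is in place; I do not anticipate any genuine obstacle. The most error--prone step, and hence the one I would write out carefully, is verifying that the $J_{n1}$--eigenvalue of $B$ in the $x=+$ case (where the upper--left block contains $-A$ rather than $-J_n+A$ or $-J_n$) really does reduce to $(\lambda-2m)(\lambda-3r)-2r$; here the cancellation uses both $AJ_{n1}=rJ_{n1}$ and $rn=2m$, and a sign error in the row operation would break the match.
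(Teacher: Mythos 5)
Your proposal is correct and is precisely the paper's intended argument: the theorem is stated with only the word ``Similarly,'' deferring to the proof of $f(\lambda,G^{01+})$, and your block decompositions, the row operation by $-\tfrac{1}{2}J_{mn}$ using $J_{mn}R=2J_m$, the reduction via Lemma \ref{l2} to an $n\times n$ matrix $B$, and the evaluation of its spectrum by Lemma \ref{th3} (with $RJ_{mn}=rJ_n$ and $rn=2m$) are exactly the steps of that proof. One caution: carrying out your computation for $G^{-1+}$ yields the product factor $(\lambda-m)(\lambda-n-r+2+q_i)-q_i$, not the printed $(\lambda-m)(\lambda-n+r+2-q_i)-q_i$; comparison with the $G^{-0+}$, $G^{-++}$ and $G^{--+}$ formulas shows the printed version is a typo, so a disagreement there is not an error in your argument.
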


\begin{theo}\label{th4}
\begin{eqnarray*}
f(\lambda,
G^{+++})&=&[(\lambda-3r+2)(\lambda-4r)](\lambda-2r+2)^{m-n}\\
&&\prod\limits_{i=1}^{n-1}[(\lambda-r-q_i)(\lambda-2r+2-q_i)-q_i].
\end{eqnarray*}
\end{theo}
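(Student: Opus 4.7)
The plan is to compute $\det(\lambda I_{n+m} - Q(G^{+++}))$ as a $2\times 2$ block determinant, reduce it via Lemma \ref{l2} to a Schur complement that is a rational function of $Q$, and then read off eigenvalues with Lemma \ref{th3}.

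First I would write down the structure of $G^{+++}$. Since the $+++$ transformation keeps the edges of $G$, of $G^l$ and of the incidence graph $B(G)$,
\[
A(G^{+++}) = \begin{pmatrix} A & R \\ R^\top & R^\top R - 2I_m \end{pmatrix}.
\]
Each vertex of $V(G)$ has degree $r+r=2r$ and each vertex of $E(G)$ has degree $(2r-2)+2=2r$, so $D(G^{+++}) = 2rI_{n+m}$. Using $A = Q - rI_n$, this gives
\[
\lambda I_{n+m} - Q(G^{+++}) = \begin{pmatrix} (\lambda - r)I_n - Q & -R \\ -R^\top & (\lambda - 2r+2)I_m - R^\top R \end{pmatrix}.
\]

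Next I apply Lemma \ref{l2} with $C = (\lambda - 2r+2)I_m - R^\top R$ as the invertible block; this holds for all but finitely many $\lambda$, which is enough since both sides are polynomials. The resulting Schur complement is
\[
(\lambda - r)I_n - Q - R\bigl[(\lambda - 2r+2)I_m - R^\top R\bigr]^{-1}R^\top.
\]
The key observation is the intertwining identity: from $(aI_n - RR^\top)R = R(aI_m - R^\top R)$ one deduces $R(aI_m - R^\top R)^{-1}R^\top = (aI_n - Q)^{-1}Q$ with $a = \lambda - 2r+2$. Hence the Schur complement collapses to a rational function of $Q$,
\[
S := (\lambda - r)I_n - Q - \bigl[(\lambda - 2r+2)I_n - Q\bigr]^{-1}Q.
\]

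Third, I exploit that $S$ is a function of $Q$ while $C$ is a polynomial in $R^\top R$. By Lemma \ref{th3}(2), $|C| = (\lambda - 2r+2)^{m-n}\prod_{i=1}^n(\lambda - 2r+2 - q_i)$; and $S$ has eigenvalue $(\lambda - r - q_i) - q_i/(\lambda - 2r+2 - q_i)$ at each $q_i$. Pairing the $i$-th factor of $|C|$ with the $i$-th eigenvalue of $S$ clears the denominator and yields
\[
f(\lambda, G^{+++}) = (\lambda - 2r+2)^{m-n}\prod_{i=1}^n\bigl[(\lambda - r - q_i)(\lambda - 2r+2 - q_i) - q_i\bigr].
\]
Finally I isolate the $i=n$ factor using $q_n = 2r$: a short expansion shows $(\lambda-3r)(\lambda-4r+2) - 2r = (\lambda - 3r + 2)(\lambda - 4r)$, which matches the leading bracket in the claim. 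The main obstacle I anticipate is the intertwining identity between $R^\top R$ and $Q = RR^\top$; once that is in hand, the rest is routine determinant manipulation together with the spectral information from Lemma \ref{th3}.
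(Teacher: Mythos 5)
Your proof is correct, but it takes a genuinely different route through the computation than the paper does. Both arguments start from the same block form of $\lambda I_{n+m}-Q(G^{+++})$ and both end by reading off eigenvalues of a matrix function of $Q$ via Lemma \ref{th3}; the difference is in how the off-diagonal blocks are eliminated. The paper first performs a block row operation (adding $-R^{\top}$ times the first block row to the second), which turns the lower-right block into the scalar matrix $(\lambda-2r+2)I_m$; Lemma \ref{l2} then produces a matrix $B$ that is a genuine \emph{polynomial} in $Q$ (using $RR^{\top}=Q$ and $A=Q-rI_n$), so Lemma \ref{th3} applies exactly as stated. You instead take the Schur complement against the non-scalar block $C=(\lambda-2r+2)I_m-R^{\top}R$ directly and tame the term $R\,C^{-1}R^{\top}$ with the push-through identity $R(aI_m-R^{\top}R)^{-1}R^{\top}=(aI_n-Q)^{-1}Q$, which is correct (from $(aI_n-RR^{\top})R=R(aI_m-R^{\top}R)$) but is not in the paper and must be supplied. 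Your route buys a conceptually cleaner factorization $|C|\cdot|S|$ in which the cancellation of the spurious denominators $(\lambda-2r+2-q_i)$ is transparent, and it automatically yields the uniform product over all $i=1,\dots,n$ before specializing $q_n=2r$ to recover the bracket $(\lambda-3r+2)(\lambda-4r)$; the cost is that you work with a rational function of $Q$, so you need the (easy, but unstated) extension of Lemma \ref{th3} from polynomials to rational functions of the symmetric matrix $Q$, together with the genericity-in-$\lambda$ argument you correctly invoke. The paper's route avoids both of these at the price of a slightly more ad hoc row operation. Either way the endgame identity $(\lambda-3r)(\lambda-4r+2)-2r=(\lambda-3r+2)(\lambda-4r)$ checks out, so your argument is complete.
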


\begin{proof} Obviously,
\[
A(G^{+++})= \left(
\begin{array}{cccc}
A  &  R\\
R^{\top} & A(G^l)
\end{array}
\right)\quad and   \quad   D(G^{+++})= \left(
\begin{array}{cccc}
2rI_n  &  0\\
0 & 2rI_m
\end{array}
\right).
\]
Then
\[
\begin{array}{lll}
f(\lambda, G^{+++})&=& \left|
\begin{array}{cccc}
(\lambda-2r)I_n-A  & -R\\
     -R^{\top}      & (\lambda-2r+2)I_m-R^{\top}R
\end{array}
\right|.
\end{array}
\]
Let \[ M=\left(
\begin{array}{cccc}
(\lambda-2r)I_n-A  & -R\\
     -R^{\top}      & (\lambda-2r+2)I_m-R^{\top}R
\end{array}
\right).
\]
 Multiplying the first row of the block matrix $M$ by
$-R^{\top}$ and adding the result to the second row of $M$, we obtain a
new matrix
 \[
\begin{array}{lll}
M_1&=& \left(
\begin{array}{cccc}
(\lambda-2r)I_n -A &  -R\\
(2r-\lambda-1)R^{\top}+R^{\top}A      & (\lambda-2r+2)I_m
\end{array}
\right).
\end{array}
\]
Obviously, $f(\lambda, G^{++1})=|M|=|M_1|$, and it is sufficient
to prove our claim for $\lambda\neq 2r-2$. By Lemma \ref{l2},
\begin{eqnarray*}
&&f(\lambda, G^{+++})\\&=& |(\lambda-2r+2)I_m|\cdot
\left|(\lambda-2r)I_n-A+\frac{1}{\lambda-2r+2}R((2r-\lambda-1)R^{\top}+R^{\top}A)\right|\\
&=&(\lambda-2r+2)^{m-n}\\
&&|(\lambda-2r)(\lambda-2r+2)I_n-(\lambda-2r+2)A+(2r-\lambda-1)RR^{\top}+RR^{\top}A|.
\end{eqnarray*}
Let
$B=(\lambda-2r)(\lambda-2r+2)I_n-(\lambda-2r+2)A+(2r-\lambda-1)RR^{\top}+RR^{\top}A.$
Note that  $RR^{\top}=Q$ and $A=Q-rI$. Then
\[
B=(\lambda-2r)(\lambda-2r+2)I_n-(\lambda-2r+2)(Q-rI)+(2r-\lambda-1)Q+Q(Q-rI).\]
 By Lemma \ref{th3}, the eigenvalues of
$B$ are
\[\sigma_n=(\lambda-2r)(\lambda-2r+2)-r(\lambda-2r+2)+2r(2r-\lambda-1)+2r^2=(\lambda-3r+2)(\lambda-4r)\]
and for  $1\leq i\leq n-1$,
\begin{eqnarray*}
\sigma_i&=&(\lambda-2r)(\lambda-2r+2)-(\lambda-2r+2)(q_i-r)+(2r-\lambda-1)q_i+q_i(q_i-r)\\
&=&(\lambda-r-q_i)(\lambda-2r+2-q_i)-q_i.
\end{eqnarray*}
Then
\[|B|=[(\lambda-3r+2)(\lambda-4r)]\prod\limits_{i=1}^{n-1}[(\lambda-r-q_i)(\lambda-2r+2-q_i)-q_i],\]
 and thus the result follows.
\end{proof}

Similarly, we can prove the following theorem.

\begin{theo}
\begin{eqnarray*} f(\lambda,
G^{0++})&=&[(\lambda-r)(\lambda-4r+2)-2r](\lambda-2r+2)^{m-n}\\
&&\prod\limits_{i=1}^{n-1}[(\lambda-r)(\lambda-2r+2-q_i)-q_i],\\
f(\lambda,
G^{1++})&=&[(\lambda-r-2n+2)(\lambda-4r+2)-2r](\lambda-2r+2)^{m-n}
\\&&\cdot
\prod\limits_{i=1}^{n-1}[(\lambda-r-n+2)(\lambda-2r+2-q_i)-q_i],\\
 f(\lambda,
G^{-++})&=&[(\lambda-2n+r+2)(\lambda-4r+2)-2r](\lambda-2r+2)^{m-n}\\&&\prod\limits_{i=1}^{n-1}[(\lambda-n-r+2+q_i)(\lambda-2r+2-q_i)-q_i].
\end{eqnarray*}
\end{theo}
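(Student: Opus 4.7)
The plan is to follow verbatim the Schur-complement template used for $G^{+++}$ in Theorem \ref{th4}. For each of the three cases $x\in\{0,1,-\}$, the only ingredient that changes is the vertex-vertex block of the signless Laplacian. I would first write
\[
\lambda I_{n+m}-Q(G^{x++})=\left(\begin{array}{cc} T_x & -R \\ -R^\top & (\lambda-2r+2)I_m-R^\top R\end{array}\right),
\]
where the top-left block $T_x:=\lambda I_n-Q_{11}$ equals $(\lambda-r)I_n$ for $x=0$, $(\lambda-r-n+2)I_n-J_n$ for $x=1$, and $(\lambda-n+2)I_n+A-J_n$ for $x=-$; indeed, the degree of a vertex of $V(G)$ in $G^{x++}$ equals $r$, $n+r-1$, and $n-1$, respectively. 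The edge-block is the same as for $G^{+++}$ since in every case an edge-vertex has degree $2r-2$ in $(G^l)^+$ plus $2$ in $B(G)$.

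Next, I would perform the same block row operation as in the proof of Theorem \ref{th4}: left-multiply the first block row by $-R^\top$ and add it to the second block row. This cancels $R^\top R$ in the bottom-right block, leaving $(\lambda-2r+2)I_m$ there, and produces $-R^\top(I_n+T_x)$ in the bottom-left. For $\lambda\neq 2r-2$, Lemma \ref{l2} together with $RR^\top=Q$ then gives
\[
f(\lambda,G^{x++})=(\lambda-2r+2)^{m-n}\,|B_x|,\qquad B_x:=\bigl[(\lambda-2r+2)I_n-Q\bigr]\,T_x-Q.
\]
For $G^{-++}$ I would first substitute $A=Q-rI_n$ inside $T_-$ so that $B_-$ is a polynomial in $Q$ and $J_n$; in the other two cases $B_x$ is already of this form. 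Lemma \ref{th3}(1) then produces the eigenvalues of $B_x$ in two families: the all-ones eigenvector contributes, via $Q\mapsto 2r$ and $J_n\mapsto n$, the quadratic factors $(\lambda-r)(\lambda-4r+2)-2r$, $(\lambda-r-2n+2)(\lambda-4r+2)-2r$, and $(\lambda-2n+r+2)(\lambda-4r+2)-2r$ stated in the theorem; the remaining $n-1$ eigenvalues come from $Q\mapsto q_i$ and $J_n\mapsto 0$ and yield the corresponding product factors.

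The only step that requires real attention is the bookkeeping in the expansion of $B_x$: for $G^{1++}$ the extra $-J_n$ in $T_1$ must be carried through the multiplication by $(\lambda-2r+2)I_n-Q$ (using $QJ_n=2rJ_n$), and for $G^{-++}$ both the $-J_n$ term and the substitution $A=Q-rI$ must be inserted at the right moment so that the $Q^2$ term created by $QA$ is combined with $Q J_n$ correctly. Each of these verifications is a short polynomial identity, and no new structural input beyond what was used for Theorem \ref{th4} is needed.
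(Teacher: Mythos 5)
Your proposal is correct and follows exactly the Schur-complement method the paper intends when it says this theorem is proved ``similarly'' to Theorem \ref{th4}: the same row operation by $-R^{\top}$, the same reduction to a determinant of a polynomial in $Q$ and $J_n$, and Lemma \ref{th3}; I checked that your matrices $T_x$ and the resulting eigenvalues $\sigma_n$ and $\sigma_i$ all match the stated formulas. No gaps.
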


\begin{theo}
\begin{eqnarray*}
f(\lambda,
G^{+-+})&=&[(\lambda-3r)(\lambda-2m+4r-4)-2r](\lambda-m+2r-4)^{m-n}
\\&&
\prod\limits_{i=1}^{n-1}[(\lambda-r-q_i)(\lambda-m+2r-4+q_i)-q_i].
\end{eqnarray*}
\end{theo}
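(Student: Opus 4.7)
The plan is to follow the block-matrix pattern of Theorem \ref{th4}, performing two successive block row operations on $\lambda I_{n+m}-Q(G^{+-+})$ so that the lower-right block becomes a scalar multiple of $I_m$, and then invoking Lemmas \ref{l2} and \ref{th3} to extract the eigenvalues.

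First I would set up the block form. In $G^{+-+}$ each original vertex has degree $r+r=2r$ (from $G$ and from the incidences of $B(G)$), while each edge-vertex has degree $(m-2r+1)+2=m-2r+3$ (from $(G^l)^c$ and from $B(G)$). Since $A((G^l)^c)=J_m-I_m-A(G^l)=J_m+I_m-R^{\top}R$, I would write
\[
\lambda I_{n+m}-Q(G^{+-+})=\left(\begin{array}{cc}(\lambda-2r)I_n-A & -R\\ -R^{\top} & (\lambda-m+2r-4)I_m-J_m+R^{\top}R\end{array}\right),
\]
and set $\beta=\lambda-m+2r-4$ for brevity.

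Next I would clear the lower-right block in two steps. Adding $R^{\top}$ times the first block row to the second kills the $R^{\top}R$ term in position $(2,2)$, leaving $\beta I_m-J_m$. Then, using $J_{mn}R=2J_m$, adding $-\tfrac{1}{2}J_{mn}$ times the first block row to the second also kills the $-J_m$ term, so that position $(2,2)$ becomes $\beta I_m$. Collecting the contributions to position $(2,1)$ and using $J_{mn}A=rJ_{mn}$, that block becomes $(\lambda-2r-1)R^{\top}-R^{\top}A-\tfrac{\lambda-3r}{2}J_{mn}$. For $\lambda\ne m-2r+4$ I would then apply Lemma \ref{l2} with $C=\beta I_m$ and simplify with $RR^{\top}=Q$, $RJ_{mn}=rJ_n$, and $A=Q-rI$ to reach $f(\lambda,G^{+-+})=\beta^{m-n}\,|B|$, where
\[
B=(\lambda-r)I_n-Q+\frac{1}{\beta}\!\left[(\lambda-r-1)Q-Q^2-\tfrac{(\lambda-3r)r}{2}J_n\right].
\]

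Finally I would apply Lemma \ref{th3}(1) to read off the eigenvalues of $B$. On eigenvectors of $Q$ orthogonal to $J_{n1}$, the $J_n$-term contributes nothing and a short rearrangement gives $\tfrac{1}{\beta}\!\left[(\lambda-r-q_i)(\beta+q_i)-q_i\right]$ for $i=1,\dots,n-1$. On $J_{n1}$, using $Q$-eigenvalue $2r$ and $J_n$-eigenvalue $n$, one gets $\tfrac{1}{\beta}\!\left[(\beta-m)(\lambda-3r)+2r(\lambda-3r-1)\right]=\tfrac{1}{\beta}\!\left[(\lambda-3r)(\lambda-2m+4r-4)-2r\right]$. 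Multiplying the eigenvalues together and absorbing the $\beta^{m-n}$ factor produces the claimed formula. The main obstacle will be the bookkeeping for this exceptional eigenvalue: it only collapses to the advertised form after invoking the regularity identity $rn=2m$ to rewrite $\tfrac{rn}{2}(\lambda-3r)$ as $m(\lambda-3r)$, which then combines with $\beta(\lambda-3r)$ to give the needed $(\beta-m)(\lambda-3r)$.
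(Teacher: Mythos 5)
Your route is the same as the paper's: the paper performs your two row operations in a single step, adding $\bigl(-\tfrac{1}{2}J_{mn}+R^{\top}\bigr)$ times the first block row to the second, and then applies Lemmas \ref{l2} and \ref{th3} exactly as you propose. Your $(2,1)$ block, the identities $J_{mn}R=2J_m$, $RJ_{mn}=rJ_n$, $J_{mn}A=rJ_{mn}$, and both eigenvalue computations (including the use of $rn=2m$ to reach $(\lambda-3r)(\lambda-2m+4r-4)-2r$) all agree with the paper's.

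There is, however, one bookkeeping inconsistency to repair. With $B$ defined as you write it --- the genuine Schur complement, still carrying the $\tfrac{1}{\beta}$ terms --- Lemma \ref{l2} gives $f(\lambda,G^{+-+})=|\beta I_m|\cdot|B|=\beta^{m}|B|$, not $\beta^{m-n}|B|$. Each of your $n$ eigenvalues of $B$ contributes a factor $\tfrac{1}{\beta}$, so $|B|=\beta^{-n}\prod(\cdots)$, and it is $\beta^{m}\cdot\beta^{-n}=\beta^{m-n}$ that yields the advertised $(\lambda-m+2r-4)^{m-n}$. As literally stated, $f=\beta^{m-n}|B|$ combined with your eigenvalues would leave the answer off by $\beta^{-n}$. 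The paper sidesteps this by taking its $B$ to be $\beta$ times the Schur complement, so that its eigenvalues are polynomials in $\lambda$ and the prefactor really is $\beta^{m-n}$; either convention is fine, but you must use one consistently.
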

\begin{proof} 
Obviously,
\[
A(G^{+-+})= \left(
\begin{array}{cccc}
A  &  R\\
R^{\top} & J_m-I_m-A(G^l)
\end{array}
\right)=\left(
\begin{array}{cccc}
A  &  R\\
R^{\top} & J_m+I_m-R^{\top}R
\end{array}
\right)\]   and  \[ D(G^{+-+})= \left(
\begin{array}{cccc}
2rI_n  &  0\\
0 & (m-2r+3)I_m
\end{array}
\right).
\]
Then
\begin{eqnarray*}
f(\lambda, G^{+-+})&=& \left|
\begin{array}{cccc}
(\lambda-2r)I_n-A  &  -R\\
     -R^{\top}      & (\lambda-m+2r-4)I_m-J_m+R^{\top}R
\end{array}
\right|.
\end{eqnarray*}
 Let \[M=\left(
\begin{array}{cccc}
(\lambda-2r)I_n-A  &  -R\\
     -R^{\top}      & (\lambda-m+2r-4)I_m-J_m+R^{\top}R
\end{array}
\right).
\]
Multiplying the first row of the block matrix $M$ by
$-\frac{1}{2}J_{mn}+R^{\top}$ and adding the result to the second row of
$M$, we obtain a new matrix
 \begin{eqnarray*}
M_1&=& \left(
\begin{array}{cccc}
(\lambda-2r)I_n -A &  -R\\
(\lambda-2r-1)R^{\top}-R^{\top}A+\frac{3r-\lambda}{2}J_{mn}      &
(\lambda-m+2r-4)I_m
\end{array}
\right).
\end{eqnarray*}
Obviously, $f(\lambda,
G^{+-+})=|M|=|M_1|$, and it is sufficient to prove our claim for
$\lambda\neq m-2r+4$. By Lemma \ref{l2},
\[
f(\lambda, G^{+-+})=(\lambda-m+2r-4)^{m-n}\cdot |B|.
\]
where
$B=(\lambda-2r)(\lambda-m+2r-4)I_n-(\lambda-m+2r-4)A+(\lambda-2r-1)RR^{\top}+\frac{3r-\lambda}{2}RJ_{mn}-RR^{\top}A.$
Note that $RR^{\top}=Q$, $RJ_{mn}=rJ_n$ and $A=Q-rI$. Then
 \begin{eqnarray*}
B&=&(\lambda-2r)(\lambda-m+2r-4)I_n-(\lambda-m+2r-4)(Q-rI)+(\lambda-2r-1)Q\\
&&+\frac{(3r-\lambda)r}{2}J_{n}-Q(Q-rI).
\end{eqnarray*}
By Lemma \ref{th3}, the
eigenvalues of $B$ are
\begin{eqnarray*}
\sigma_n&=&(\lambda-2r)(\lambda-m+2r-4)-r(\lambda-m+2r-4)+2r(\lambda-2r-1)\\
&&+\frac{(3r-\lambda)rn}{2}-2r^2
\\&=&(\lambda-3r)(\lambda-2m+4r-4)-2r
\end{eqnarray*} and for $i=1,2,\dots, n-1$
\begin{eqnarray*}
\sigma_i&=&(\lambda-2r)(\lambda-m+2r-4)-(\lambda-m+2r-4)(q_i-r)\\
&&+(\lambda-2r-1)q_i-q_i(q_i-r)\\
&=&(\lambda-r-q_i)(\lambda-m+2r-4+q_i)-q_i.
\end{eqnarray*}
 Then
\begin{eqnarray*}
|B|&=&[(\lambda-3r)(\lambda-2m+4r-4)-2r]
\\&&\prod\limits_{i=1}^{n-1}[(\lambda-r-q_i)(\lambda-m+2r-4+q_i)-q_i],
\end{eqnarray*}
 and thus the result follows.
\end{proof}

Similarly, we have the following theorem.

\begin{theo}
\begin{eqnarray*}  f(\lambda,
G^{0-+})&=&[(\lambda-r)(\lambda-2m+4r-4)-2r](\lambda-m+2r-4)^{m-n}\\
&&\prod\limits_{i=1}^{n-1}[(\lambda-r)(\lambda-m+2r-4+q_i)-q_i],\\
f(\lambda,
G^{1-+})&=&[(\lambda-r-2n+2)(\lambda-2m+4r-4)-2r](\lambda-m+2r-4)^{m-n}
\\&& \prod\limits_{i=1}^{n-1}[(\lambda-r-n+2)(\lambda-m+2r-4+q_i)-q_i],
\\
 f(\lambda,
G^{--+})&=&[(\lambda-2n+r+2)(\lambda-2m+4r-4)-2r](\lambda-m+2r-4)^{m-n}
\\&&
\prod\limits_{i=1}^{n-1}[(\lambda-n-r+2+q_i)(\lambda-m+2r-4+q_i)-q_i].
\end{eqnarray*}
\end{theo}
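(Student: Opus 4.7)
The plan is to handle all three identities uniformly by mimicking the proof of the $G^{+-+}$ theorem, since the only structural difference among $G^{x-+}$ for $x\in\{0,1,+,-\}$ lies in the $V(G)$-block of $Q(G^{x-+})$; the $E(G)$-block and the off-diagonal blocks are identical across the four cases.

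First I would write $Q(G^{x-+})$ as a $2\times 2$ block matrix and, using $A(G^l)=R^{\top}R-2I_m$, put $\lambda I-Q(G^{x-+})$ in the form
\[
M=\begin{pmatrix} \lambda I_n-D_1-A(G^x) & -R\\ -R^{\top} & (\lambda-m+2r-4)I_m-J_m+R^{\top}R\end{pmatrix},
\]
where $D_1=rI_n$, $(n+r-1)I_n$, or $(n-1)I_n$ according as $x=0,1,-$. The crucial observation, already used for $G^{+-+}$, is that multiplying the first block row on the left by $R^{\top}-\tfrac{1}{2}J_{mn}$ and adding to the second block row cancels both $-J_m$ and $R^{\top}R$ in the lower-right, because $(R^{\top}-\tfrac{1}{2}J_{mn})(-R)=-R^{\top}R+J_m$ using $J_{mn}R=2J_m$. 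The resulting lower-right block $(\lambda-m+2r-4)I_m$ is invertible for $\lambda\neq m-2r+4$, so Lemma~\ref{l2} reduces $f(\lambda,G^{x-+})$ to $(\lambda-m+2r-4)^{m-n}|B|$, where $B$ acts on $\mathbb{R}^n$.

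Next I would compute $B$ explicitly. Using $RR^{\top}=Q$, $RJ_{mn}=rJ_n$, $J_nA=rJ_n$, and $J_n^2=nJ_n$, the matrix $B$ simplifies to a polynomial in $Q$, $J_n$, and (only when $x=-$) $A$; since $A=Q-rI$ this further reduces to $B=P(Q,J_n)$ for an explicit two-variable polynomial $P$ depending on $x$ and $\lambda$. Lemma~\ref{th3}(1) then yields all $n$ eigenvalues of $B$: a large eigenvalue $\sigma_n=P(2r,n)$ and $n-1$ eigenvalues $\sigma_i=P(q_i,0)$ for $i=1,\dots,n-1$. Multiplying these together and restoring the $(\lambda-m+2r-4)^{m-n}$ prefactor produces the three stated products.

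The main obstacle is routine bookkeeping: in each case I have to expand the $(2,1)$-block after the row operation (for $x=-$, the extra $+A$ contribution in the top-left feeds a $(R^{\top}-\tfrac{1}{2}J_{mn})A$ term into the $(2,1)$-block, which requires $J_{mn}A=rJ_{mn}$), and then verify that $\sigma_n$ collapses via $rn=2m$ into the stated quadratic $(\lambda-\ast)(\lambda-2m+4r-4)-2r$ with $\ast=r$, $r+2n-2$, or $2n-r-2$. This is the same cancellation driving the $G^{+-+}$ proof, so no new ideas should be required beyond careful expansion.
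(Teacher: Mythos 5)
Your proposal is correct and follows exactly the route the paper intends: the paper proves the $G^{+-+}$ case by the row operation with $R^{\top}-\tfrac{1}{2}J_{mn}$ followed by Lemma \ref{l2} and Lemma \ref{th3}, and dismisses this theorem with ``Similarly,'' which is precisely the adaptation you carry out (only the $V(G)$-block changes with $x$, and your degree matrices $rI_n$, $(n+r-1)I_n$, $(n-1)I_n$ and the resulting $\sigma_n$, $\sigma_i$ check out). No gaps.
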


\section{Signless Laplacian characteristic polynomials of $G^{xyz}$ with $z=-$}

 Since $G^{00-}$
 is a bipartite graph,  the signless laplacian eigenvalues of $G^{00-}$ are the same as the  laplacian eigenvalues of
 it.
 \[ f(\lambda,
G^{00-})=\lambda(\lambda-n-m+r+2)(\lambda-n+2)^{m-n}\prod
\limits_{i=1}^{n-1}\{(\lambda-m+r)(\lambda-n+2)-q_i\}.\]

\begin{theo}
\begin{eqnarray*}
 f(\lambda,
G^{10-})&=&[(\lambda-n+2)(\lambda-2n+m+r+2)+(2r-m)n-2r]\\&&(\lambda-n+2)^{m-n}
\prod\limits_{i=1}^{n-1}[(\lambda-m-n+r+2)(\lambda-n+2)-q_i].
\end{eqnarray*}
\end{theo}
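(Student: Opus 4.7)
The plan is to mimic the proofs of the earlier cases in this section, treating the block structure of $Q(G^{10-})$ and reducing to a single $n\times n$ matrix to which Lemma \ref{th3}(1) can be applied.

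First I would write down the adjacency and degree matrices explicitly. Since $G^{10-}$ glues $K_n$ on $V(G)$ to the empty graph on $E(G)$ via $B^c(G)$, I have
\[
A(G^{10-})=\left(\begin{array}{cc} J_n-I_n & J_{nm}-R \\ J_{mn}-R^{\top} & 0 \end{array}\right),\qquad D(G^{10-})=\left(\begin{array}{cc} (n+m-r-1)I_n & 0 \\ 0 & (n-2)I_m \end{array}\right),
\]
using that each $v\in V(G)$ has degree $(n-1)+(m-r)$ and each $e\in E(G)$ has degree $n-2$ in $G^{10-}$. Adding these gives $Q(G^{10-})$, and hence
\[
f(\lambda,G^{10-})=\left|\begin{array}{cc} (\lambda-n-m+r+2)I_n-J_n & -(J_{nm}-R) \\ -(J_{mn}-R^{\top}) & (\lambda-n+2)I_m \end{array}\right|.
\]

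Assuming $\lambda\neq n-2$, I would then apply Lemma \ref{l2} to the bottom-right block, which is $(\lambda-n+2)I_m$. The off-diagonal product to be computed is $(J_{nm}-R)(J_{mn}-R^{\top})$. Using the standard identities $J_{nm}J_{mn}=mJ_n$, $J_{nm}R^{\top}=rJ_n$, $RJ_{mn}=rJ_n$, and $RR^{\top}=Q$, this simplifies to $(m-2r)J_n+Q$. This reduces $f(\lambda,G^{10-})$ to $(\lambda-n+2)^{m-n}\cdot |B|$, where
\[
B=(\lambda-n+2)(\lambda-n-m+r+2)I_n-(\lambda-n+m-2r+2)J_n-Q.
\]

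The final step is to recognize $B$ as $P(Q,J_n)$ for the polynomial
\[
P(x,y)=(\lambda-n+2)(\lambda-n-m+r+2)-(\lambda-n+m-2r+2)y-x,
\]
and read off its eigenvalues from Lemma \ref{th3}(1): the eigenvalue $\sigma_n=P(2r,n)$ (which a short rearrangement, grouping the $J_n$-coefficient with the $I_n$-coefficient, puts in the form appearing in the stated product), and $\sigma_i=P(q_i,0)=(\lambda-n+2)(\lambda-m-n+r+2)-q_i$ for $i=1,\dots,n-1$. Multiplying these together yields $|B|$ and therefore the claimed formula.

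The only real obstacle is keeping the bookkeeping clean in simplifying $\sigma_n$: one must pull out a common factor of $(\lambda-n+2)$ from the $I_n$- and $J_n$-contributions before the expression takes the form stated in the theorem, and the $-2r$ tail comes from the single $Q$-contribution via $P(2r,n)$. Everything else is a direct copy of the method already used for $G^{+11}$ and $G^{+-+}$.
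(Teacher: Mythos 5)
Your proposal is, step for step, the paper's own proof: the same block forms of $A(G^{10-})$ and $D(G^{10-})$, the same application of Lemma \ref{l2} against the block $(\lambda-n+2)I_m$, the same simplification $(J_{nm}-R)(J_{mn}-R^{\top})=(m-2r)J_n+Q$, the same matrix $B$, and the same use of Lemma \ref{th3}(1). The eigenvalues $\sigma_i=(\lambda-m-n+r+2)(\lambda-n+2)-q_i$ and the unsimplified $\sigma_n=P(2r,n)$ are all correct and identical to the paper's.

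The one place you should not wave your hands is exactly the "short rearrangement" of $\sigma_n$: carried out, it gives
\[
\sigma_n=(\lambda-n+2)(\lambda-2n-m+r+2)+(2r-m)n-2r,
\]
with $-m$, whereas the theorem as printed (and the paper's own last line) has $\lambda-2n+m+r+2$, with $+m$. These differ by $2m(\lambda-n+2)$, so your assertion that the rearrangement "puts it in the form appearing in the stated product" is literally false; what your (correct) derivation proves is the $-m$ version. A sanity check with $G=K_3$ (so $G^{10-}$ is the $3$-sun, $n=m=3$, $r=2$) gives signless Laplacian characteristic polynomial $(\lambda^2-6\lambda+4)(\lambda^2-3\lambda+1)^2$, matching the $-m$ version (the $+m$ version would give leading factor $\lambda^2-2$, whose roots do not even sum to the trace $12$). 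So your method is right and coincides with the paper's; the discrepancy is a sign typo in the stated leading factor, which you should correct rather than absorb into an unexamined "rearrangement."
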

\begin{proof} Obviously,
 \[
A(G^{10-})= \left(
\begin{array}{cccc}
J_n-I_n  &  J_{nm}-R\\
J_{mn}-R^{\top} & 0
\end{array}
\right) \] and \[ D(G^{10-})= \left(
\begin{array}{cccc}
(n+m-r-1)I_n  &  0\\
0 & (n-2)I_m
\end{array}
\right).
\]
Then
\[
f(\lambda, G^{10-})= \left|
\begin{array}{cccc}
(\lambda-n-m+r+2)I_n-J_n  &  R-J_{nm}\\
    R^{\top}-J_{mn}     & (\lambda-n+2)I_m
\end{array}
\right|.
\]
Clearly, it is sufficient to prove our claim for $\lambda\neq n-2$.
By Lemma \ref{l2},\[f(\lambda, G^{10-})=(\lambda-n+2)^{m-n}|B|,\]
where
\begin{eqnarray*}
B&=&(\lambda-n-m+r+2)(\lambda-n+2)I_n-(\lambda-n+2)J_n\\
&&-(R-J_{nm})(R^{\top}-J_{mn}).
\end{eqnarray*}
Note that  $RR^{\top}=Q$, $J_{nm}R^{\top}=rJ_n$ and $RJ_{mn}=rJ_n$,
then
\[B=(\lambda-n-m+r+2)(\lambda-n+2)I_n+(2r+n-m-2-\lambda)J_n-Q.\]
 By Lemma \ref{th3}, the eigenvalues of $B$ are
\begin{eqnarray*}
\sigma_n&=&(\lambda-n-m+r+2)(\lambda-n+2)+(2r+n-m-2-\lambda)n-2r\\&=&(\lambda-n+2)(\lambda-2n+m+r+2)+(2r-m)n-2r
\end{eqnarray*}
and for $i=1,2,\dots, n-1$,
\[\sigma_i=(\lambda-n-m+r+2)(\lambda-n+2)-q_i.
\]
 Then
\begin{eqnarray*}
|B|&=&[(\lambda-n+2)(\lambda-2n+m+r+2)+(2r-m)n-2r]\\&&\prod\limits_{i=1}^{n-1}
[(\lambda-m-n+r+2)(\lambda-n+2)-q_i],
\end{eqnarray*}
 and thus the result follows.
\end{proof}

Similarly, we have the following theorem.

\begin{theo}
\begin{eqnarray*}f(\lambda,
G^{+0-})&=&[(\lambda-n+2)(\lambda-m-r)+(2r-m)n-2r](\lambda-n+2)^{m-n}\\
&&\prod\limits_{i=1}^{n-1}[(\lambda-n+2)(\lambda-m+r-q_i)-q_i],\\
 f(\lambda,
G^{-0-})&=&[(\lambda-n+2)(\lambda-2n-m+3r+2)+(2r-m)n-2r](\lambda-n+2)^{m-n}\\
&&\prod\limits_{i=1}^{n-1}[(\lambda-n+2)(\lambda-n-m+r+2+q_i)-q_i].
\end{eqnarray*}
\end{theo}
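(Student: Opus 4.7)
The plan is to mimic, verbatim, the block-determinant argument used for $G^{10-}$ in the preceding theorem; only the $(1,1)$-block of $A(G^{x0-})$ changes between the two cases. Since $y=0$ the $(2,2)$-adjacency block is zero, and since $z=-$ the off-diagonal blocks are $J_{nm}-R$ and $J_{mn}-R^\top$. Each vertex of $G$ is incident to $m-r$ edges in $B^c(G)$ and each edge is non-incident to $n-2$ vertices, so
\[
D(G^{+0-})=\mathrm{diag}(m I_n,\,(n-2)I_m),\qquad D(G^{-0-})=\mathrm{diag}((n+m-2r-1)I_n,\,(n-2)I_m).
\]
Writing $\lambda I_{n+m}-Q(G^{x0-})$ as a $2\times 2$ block matrix and assuming $\lambda\neq n-2$, I would apply Lemma \ref{l2} to the invertible lower-right block $(\lambda-n+2)I_m$ to obtain
\[
f(\lambda,G^{x0-})=(\lambda-n+2)^{m-n}\,|B_x|,
\]
where $B_x$ is the $n\times n$ Schur complement.

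The crucial simplification is the cross-term. Using $RR^\top=Q$, $RJ_{mn}=J_{nm}R^\top=rJ_n$ and $J_{nm}J_{mn}=mJ_n$ one checks
\[
(R-J_{nm})(R^\top-J_{mn})=Q+(m-2r)J_n,
\]
so that after expansion $B_x$ is a linear combination of $I_n$, $A$, $Q$, $J_n$ with $\lambda$-dependent coefficients. Substituting $A=Q-rI_n$ renders $B_x$ a polynomial in $Q$ and $J_n$ alone, which is precisely the form covered by Lemma \ref{th3}(1).

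That lemma yields one distinguished eigenvalue from the all-ones eigenvector (where $Q\mapsto 2r$ and $J_n\mapsto n$) and $n-1$ further eigenvalues with $q_i$ in place of $Q$ and $0$ in place of $J_n$; the former produces the bracketed prefactor and the latter produces the product in the claimed formulas. The only real obstacle is bookkeeping: after collecting the contributions of the $(1,1)$-block, the cross-term $-Q-(m-2r)J_n$, and the substitution $A=Q-rI_n$, one has to factor $(\lambda-n+2)$ out of each non-distinguished eigenvalue so that the residue $-q_i$ appears and the factors $(\lambda-n+2)(\lambda-m+r-q_i)-q_i$ and $(\lambda-n+2)(\lambda-n-m+r+2+q_i)-q_i$ emerge, and similarly out of the distinguished eigenvalue so that the tail collapses to $(2r-m)n-2r$ in both cases. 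With that arithmetic carried out, the two formulas drop out exactly as stated.
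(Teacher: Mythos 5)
Your proposal is correct and follows essentially the same route the paper intends (the paper omits the proof with ``Similarly,'' referring to the preceding argument for $G^{10-}$): the degree matrices, the Schur complement via Lemma \ref{l2} on the block $(\lambda-n+2)I_m$, the identity $(R-J_{nm})(R^{\top}-J_{mn})=Q+(m-2r)J_n$, the substitution $A=Q-rI_n$, and the eigenvalue extraction via Lemma \ref{th3}(1) all check out and do reproduce the stated factorizations.
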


\begin{theo}
\begin{eqnarray*}f(\lambda,
G^{01-})&=&[(\lambda-m+r)(\lambda-n-2m+4)+(4-n)m-2r](\lambda-n-m+4)^{m-n}\\&&\prod\limits_{i=1}^{n-1}
[(\lambda-m-n+4)(\lambda-m+r)-q_i].\end{eqnarray*}
\end{theo}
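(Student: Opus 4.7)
The strategy is the same as the earlier Schur-complement arguments in Section~4 (for example the proofs of the $G^{0+1}$ and $G^{1+1}$ theorems): write $\lambda I_{n+m}-Q(G^{01-})$ as a $2\times 2$ block matrix, apply Lemma~\ref{l2} on the upper-left block, which will be an invertible scalar multiple of $I_n$, simplify the Schur complement using the identities relating $R$, $R^{\top}$, and the all-ones matrices, and then apply Lemma~\ref{th3}(2) to the resulting $m\times m$ matrix of type $P(R^{\top}R,J_m)$.

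In $G^{01-}$ every vertex of $V(G)$ has degree $m-r$ (it is joined to the $m-r$ edges it is not incident to) and every vertex of $E(G)$ has degree $(m-1)+(n-2)=m+n-3$, so
\[
\lambda I_{n+m}-Q(G^{01-}) \;=\; \begin{pmatrix} (\lambda-m+r)I_n & R-J_{nm}\\ R^{\top}-J_{mn} & (\lambda-m-n+4)I_m-J_m \end{pmatrix}.
\]
Assuming $\lambda\neq m-r$, Lemma~\ref{l2} gives $f(\lambda,G^{01-})=(\lambda-m+r)^n\det(C-FB^{-1}E)$. Using $R^{\top}J_{nm}=J_{mn}R=2J_m$ and $J_{mn}J_{nm}=nJ_m$, I obtain
\[
(R^{\top}-J_{mn})(R-J_{nm}) \;=\; R^{\top}R+(n-4)J_m,
\]
and multiplying the Schur complement through by $\lambda-m+r$ to clear denominators yields
\[
f(\lambda,G^{01-}) \;=\; (\lambda-m+r)^{n-m}\det B,
\]
where $B=(\lambda-m+r)(\lambda-m-n+4)I_m-R^{\top}R-(\lambda-m+r+n-4)J_m$.

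Writing $B=P(R^{\top}R,J_m)$ with $P(x,y)=(\lambda-m+r)(\lambda-m-n+4)-x-(\lambda-m+r+n-4)y$, Lemma~\ref{th3}(2) produces one eigenvalue $\sigma_m=P(2r,m)$, the value $P(0,0)=(\lambda-m+r)(\lambda-m-n+4)$ with multiplicity $m-n$, and the values $P(q_i,0)=(\lambda-m+r)(\lambda-m-n+4)-q_i$ for $i=1,\dots,n-1$. Rewriting $-(\lambda-m+r+n-4)m=-(\lambda-m+r)m-(n-4)m$ collapses $\sigma_m$ to $(\lambda-m+r)(\lambda-n-2m+4)+(4-n)m-2r$, which is exactly the leading bracket in the theorem. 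The $m-n$ copies of $(\lambda-m+r)(\lambda-m-n+4)$ then combine with the prefactor $(\lambda-m+r)^{n-m}$ to leave $(\lambda-m-n+4)^{m-n}$, and the remaining factors give $\prod_{i=1}^{n-1}[(\lambda-m-n+4)(\lambda-m+r)-q_i]$. The identity, established for $\lambda\neq m-r$, then extends to all $\lambda$ by polynomial continuity.

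The only step that needs care is the rearrangement of $\sigma_m$ into its claimed form; notably (in contrast to several earlier theorems in the paper) this proof does not invoke the regularity identity $rn=2m$, as the whole derivation is a purely formal block manipulation.
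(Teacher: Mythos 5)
Your proposal is correct and follows essentially the same route as the paper: the same block form of $\lambda I_{n+m}-Q(G^{01-})$, the same Schur complement via Lemma~\ref{l2} on the invertible block $(\lambda-m+r)I_n$, the same expansion $(R^{\top}-J_{mn})(R-J_{nm})=R^{\top}R+(n-4)J_m$, and the same application of Lemma~\ref{th3}(2) to the resulting matrix $B$. All computations, including the rearrangement of $\sigma_m$ and the cancellation of the $(\lambda-m+r)^{m-n}$ factors, match the paper's.
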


\begin{proof} Obviously,
\[
A(G^{01-})= \left(
\begin{array}{cccc}
0  &  J_{nm}-R\\
J_{mn}-R^{\top} & J_m-I_m
\end{array}
\right)\]    and \[ D(G^{01-})= \left(
\begin{array}{cccc}
(m-r)I_n  &  0\\
0 & (m+n-3)I_m
\end{array}
\right).
\]
Then
\[
f(\lambda, G^{01-})= \left|
\begin{array}{cccc}
(\lambda-m+r)I_n  &  R-J_{nm}\\
    R^{\top}-J_{mn}     & (\lambda-n-m+4)I_m-J_m
\end{array}
\right|.
\]
Clearly, it is sufficient to prove our claim for $\lambda\neq m-r$.
By Lemma \ref{l2} and the fact  $R^{\top}J_{nm}=2J_n$ and
$J_{mn}R=2J_n$,
\begin{eqnarray*}
&&f(\lambda, G^{01-})\\ &=&
(\lambda-m+r)^{n-m}\\
&&|(\lambda-n-m+4)(\lambda-m+r)I_m-(\lambda-m+r)J_m-(R^{\top}-J_{mn})(R-J_{nm})|
\\&=&
(\lambda-m+r)^{n-m}\\
&&|(\lambda-n-m+4)(\lambda-m+r)I_m-R^{\top}R+(m+4-\lambda-r-n)J_m|
\end{eqnarray*} Let
$B=(\lambda-n-m+4)(\lambda-m+r)I_m-R^{\top}R+(m+4-\lambda-r-n)J_m$.
By Lemma \ref{th3}, the eigenvalues of $B$ are
\begin{eqnarray*}
\sigma_n&=&(\lambda-n-m+4)(\lambda-m+r)-2r+(m+4-\lambda-r-n)m\\&=&(\lambda-m+r)(\lambda-n-2m+4)+(4-n)m-2r,
\end{eqnarray*}
 for $1\leq j\leq m-n$,
 \[
\sigma_i=(\lambda-n-m+4)(\lambda-m+r),\] and for $m-n+1\leq j\leq
m-1$,
\[\sigma_j=(\lambda-n-m+4)(\lambda-m+r)-q_j'=(\lambda-n-m+4)(\lambda-m+r)-q_{j-m+n}.\]
Then
\begin{eqnarray*}|B|&=&[(\lambda-m+r)(\lambda-n-2m+4)+(4-n)m-2r](\lambda-m+r)^{m-n}\\&&(\lambda-n-m+4)^{m-n}\prod\limits_{i=1}^{n-1}
[(\lambda-m-n+4)(\lambda-m+r)-q_i].\end{eqnarray*}
 and thus the result follows.
\end{proof}

Similarly, we have the following theorem.

\begin{theo}
\begin{eqnarray*}
f(\lambda,
G^{0+-})&=&[(\lambda-m+r)(\lambda-n-4r+6)+(4-n)m-2r](\lambda-n-2r+6)^{m-n}\\&&\prod\limits_{i=1}^{n-1}[(\lambda-m+r)(\lambda-n-2r+6-q_i)-q_i],
\\
f(\lambda,
G^{0--})&=&[(\lambda-m+r)(\lambda-n-2m+4r)+(4-n)m-2r](\lambda-n-m+2r)^{m-n}\\&&\prod\limits_{i=1}^{n-1}[(\lambda-m+r)(\lambda-n-m+2r+q_i)-q_i].
\end{eqnarray*}
\end{theo}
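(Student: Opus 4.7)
The plan is to apply the template used repeatedly in this section (as in the preceding proofs for $G^{01-}$ and $G^{0-+}$): write $\lambda I_{n+m}-Q(G^{0y-})$ as a $2\times 2$ block matrix built from $I$, $J$, and $R$; apply Lemma \ref{l2} to peel off the top-left (vertex) block; then recognise the Schur complement as a polynomial in $R^{\top}R$ and $J_m$ whose spectrum is delivered by Lemma \ref{th3}(2).

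For $G^{0+-}$ the vertex-block degree is $m-r$ and the edge-block degree is $(2r-2)+(n-2)=n+2r-4$. Using $A(G^l)=R^{\top}R-2I_m$, the matrix $\lambda I-Q(G^{0+-})$ takes the form
\[
\begin{pmatrix}(\lambda-m+r)I_n & R-J_{nm}\\ R^{\top}-J_{mn} & (\lambda-n-2r+6)I_m-R^{\top}R\end{pmatrix}.
\]
For $\lambda\ne m-r$, Lemma \ref{l2} together with the standard identities $R^{\top}J_{nm}=J_{mn}R=2J_m$ and $J_{mn}J_{nm}=nJ_m$ yields $f(\lambda,G^{0+-})=(\lambda-m+r)^{n-m}|B'|$ with
\[
B'=(\lambda-m+r)(\lambda-n-2r+6)I_m-(\lambda-m+r+1)R^{\top}R+(4-n)J_m.
\]
Applying Lemma \ref{th3}(2) to the polynomial $P(x,y)=(\lambda-m+r)(\lambda-n-2r+6)-(\lambda-m+r+1)x+(4-n)y$ then produces the ``large'' eigenvalue $P(2r,m)$, the eigenvalue $P(0,0)$ of multiplicity $m-n$, and the eigenvalues $P(q_i,0)$ for $i=1,\dots,n-1$; regrouping $(\lambda-m+r)$-factors rewrites these as $(\lambda-m+r)(\lambda-n-4r+6)+(4-n)m-2r$, $(\lambda-m+r)(\lambda-n-2r+6)$, and $(\lambda-m+r)(\lambda-n-2r+6-q_i)-q_i$ respectively, which assembles to the first claimed identity.

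For $G^{0--}$ one simply replaces $A(G^l)$ by $A((G^l)^c)=J_m+I_m-R^{\top}R$ and the edge-block degree by $(m-1-(2r-2))+(n-2)=n+m-2r-1$. The same Schur-complement step produces
\[
B''=(\lambda-m+r)(\lambda-n-m+2r)I_m+(\lambda-m+r-1)R^{\top}R-(\lambda-m+r+n-4)J_m,
\]
and Lemma \ref{th3}(2) delivers eigenvalues which, after grouping $(\lambda-m+r)$, collapse to $(\lambda-m+r)(\lambda-n-2m+4r)+(4-n)m-2r$, $(\lambda-m+r)(\lambda-n-m+2r)$, and $(\lambda-m+r)(\lambda-n-m+2r+q_i)-q_i$, matching the second identity.

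Conceptually nothing is new; the only mildly delicate step is the algebraic collapse of $P(2r,m)$ into the compact factored form $(\lambda-m+r)(\lambda-n-4r+6)$ or $(\lambda-m+r)(\lambda-n-2m+4r)$, which must be checked term by term to avoid sign slips. Everything else is routine block-matrix bookkeeping of the type already performed for the other $z=-$ and $z=+$ cases in this section.
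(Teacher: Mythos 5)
Your proposal is correct and follows exactly the template the paper intends: the paper omits an explicit proof of this theorem (stating it follows ``similarly'' to the $G^{01-}$ case), and that omitted proof is precisely your Schur-complement reduction via Lemma \ref{l2} to a polynomial in $R^{\top}R$ and $J_m$, followed by Lemma \ref{th3}(2). Your block matrices, the matrices $B'$ and $B''$, and the eigenvalue computations all check out, so nothing further is needed.
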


\begin{theo}
\begin{eqnarray*}
f(\lambda,
G^{11-})&=&[(\lambda-2n-2m+2)(\lambda-n-m+r+4)+8m](\lambda-n-m+4)^{m-n}\\&&\prod\limits_{i=1}^{n-1}[(\lambda-m-n+r+2)(\lambda-n-m+4)-q_i].
\end{eqnarray*}
\end{theo}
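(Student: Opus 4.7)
\emph{Proof plan.} I would follow the block-matrix/Schur-complement strategy used for Theorems~4.1--4.3, adapted to the fact that both diagonal blocks now carry $J$-terms. In $G^{11-}$, each $v\in V(G)$ has degree $(n-1)+(m-r)$ and each $e\in E(G)$ has degree $(m-1)+(n-2)$, and the cross-adjacency is $J_{nm}-R$. Hence
\[
\lambda I_{n+m}-Q(G^{11-})=
\begin{pmatrix}
(\lambda-n-m+r+2)I_n-J_n & R-J_{nm} \\
R^{\top}-J_{mn} & (\lambda-n-m+4)I_m-J_m
\end{pmatrix}.
\]

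Set $a=\lambda-n-m+4$ and $b=a-m=\lambda-n-2m+4$. I would take the lower-right block $C=aI_m-J_m$ as the pivot in Lemma~\ref{l2}; then $|C|=a^{m-1}b$ and $C^{-1}=\tfrac{1}{a}I_m+\tfrac{1}{ab}J_m$. Computing the Schur complement $B-EC^{-1}F$ with $E=R-J_{nm}$ and $F=R^{\top}-J_{mn}$ relies on the standard identities
\[
RR^{\top}=Q,\quad RJ_{mn}=J_{nm}R^{\top}=rJ_n,\quad J_{nm}J_{mn}=mJ_n,\quad RJ_m=rJ_{nm},\quad J_{nm}J_m=mJ_{nm}.
\]
These yield $EF=Q+(m-2r)J_n$ and $EJ_mF=(r-m)^2J_n$, so $B-EC^{-1}F$ is a polynomial in $Q$ and $J_n$ (with $\lambda$-dependent coefficients), and Lemma~\ref{th3}(1) immediately reads off its spectrum.

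The generic eigenvalues are $\sigma_i=(\lambda-n-m+r+2)-q_i/a$ for $i=1,\dots,n-1$, together with a bulkier Perron eigenvalue $\sigma_n=P(2r,n)$. Multiplying out, the $\sigma_i$ with $i<n$ absorb the factor $a^{n-1}$ against $|C|=a^{m-1}b$ to give exactly the advertised $(\lambda-n-m+4)^{m-n}\prod_{i=1}^{n-1}\bigl[(\lambda-m-n+r+2)(\lambda-n-m+4)-q_i\bigr]$. The entire theorem then reduces to the single scalar identity
\[
b\,\sigma_n=(\lambda-2n-2m+2)(\lambda-n-m+r+4)+8m.
\]

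The main obstacle is this last identity. Clearing the $1/a$ and $1/(ab)$ denominators in $\sigma_n$ produces a residue of the form $r(nr-2m)/a$, which vanishes precisely because of the regularity relation $rn=2m$; applying $rn=2m$ once more in the leftover polynomial (to reconcile $-nm$ vs.\ $-2nm$ and $-4m$ vs.\ $-2m$ terms) brings the expression into the claimed quadratic form. No further conceptual ingredient is required beyond Lemmas~\ref{th3} and~\ref{l2}; it is just this careful algebraic bookkeeping that must not be mishandled.
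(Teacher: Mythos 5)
Your proposal is correct, and I verified the key numerical claims: with $a=\lambda-n-m+4$ and $b=a-m$ one indeed has $EF=Q+(m-2r)J_n$, $EJ_mF=(r-m)^2J_n$, the $1/a$ residue in $b\sigma_n$ equals $r(2m-nr)/a=0$, and after one more use of $rn=2m$ the remaining polynomial matches $(\lambda-2n-2m+2)(\lambda-n-m+r+4)+8m$. The skeleton is the same as the paper's (block form of $\lambda I_{n+m}-Q(G^{11-})$, Lemma~\ref{l2}, then Lemma~\ref{th3} applied to an $n\times n$ matrix that is a combination of $Q$ and $J_n$); the only real divergence is how the $J_m$ in the pivot block is neutralized. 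You invert $aI_m-J_m$ explicitly via the rank-one formula $C^{-1}=\tfrac1a I_m+\tfrac1{ab}J_m$, which keeps the computation in one pass at the cost of carrying denominators that must (and do) cancel. The paper instead premultiplies the determinant by $(2-n)^{-m}$, rescales the second block row, and adds $J_{mn}$ times the first block row so that the pivot block becomes a scalar multiple of $I_m$ before applying Lemma~\ref{l2}; that avoids fractions but introduces the bookkeeping of the $(2-n)$ powers. Both routes are sound and land on the same eigenvalue computation.
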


\begin{proof} Obviously,
\[
A(G^{11-})= \left(
\begin{array}{cccc}
J_n-I_n  &  J_{nm}-R\\
J_{mn}-R^{\top} & J_m-I_m
\end{array}
\right) \]  and   \[ D(G^{11-})= \left(
\begin{array}{cccc}
(n+m-r-1)I_n  &  0\\
0 & (m+n-3)I_m
\end{array}
\right).
\]
Then
\begin{eqnarray*}
f(\lambda, G^{11-})&=& \left|
\begin{array}{cccc}
(\lambda-m-n+r+2)I_n-J_n  &  R-J_{nm}\\
    R^{\top}-J_{mn}     & (\lambda-n-m+4)I_m-J_m
\end{array}
\right|\\&=&(2-n)^{-m}| M|, \end{eqnarray*}
where  \[ M= \left(
\begin{array}{cccc}
(\lambda-m-n+r+2)I_n-J_n  &  R-J_{nm}\\
    (2-n)R^{\top}-(2-n)J_{mn}     &(2-n) (\lambda-n-m+4)I_m-(2-n)J_m
\end{array}
\right)\]
Obviously, $J_{mn}R=2J_{mm}$ and $J_{mn}J_{nm}=nJ_{mm}$.
Hence multiplying the first row of the block matrix $M$ by $J_{mn}$
and adding the result to the second row of $M$, we obtain a new
matrix
 \[ M'= \left(
\begin{array}{cccc}
(\lambda-m-n+r+2)I_n-J_n  &  R-J_{nm}\\
    (2-n)R^{\top}+(\lambda-m-n+r)J_{mn}     &(2-n) (\lambda-n-m+4)I_m
\end{array}
\right).\] Clearly, $|M'|=|M|$ and $f(\lambda,
G^{11-})=(2-n)^{-m}|M'|$. Obviously, it is sufficient to prove our
claim for $\lambda\neq n+m-4$. By Lemma \ref{l2}, $f(\lambda,
G^{11-})=(2-n)^{-n}(\lambda-n-m+4)^{m-n}|B|,$ where
\begin{eqnarray*}
B&=&(2-n)(\lambda-m-n+r+2)(\lambda-n-m+4)I_n\\
&&-(2-n)Q-(2-n)(\lambda-n-m+4)J_n\\&&-(\lambda-m+r-2)rJ_n+(\lambda-n-m+r)mJ_n.
\end{eqnarray*}  By Lemma \ref{th3}, the eigenvalues of
$B$ are
\begin{eqnarray*}
\sigma_n&=&(2-n)(\lambda-m-n+r+2)(\lambda-n-m+4)-2r(2-n)\\
&&-(2-n)(\lambda-n-m+4)n\\&&-(\lambda-m+r-2)rn+(\lambda-n-m+r)mn
\\&=&(2-n)[(\lambda-2n-2m+2)(\lambda-n-m+4+r)+8m],
\end{eqnarray*}
 and for $i=1,2,\dots, n-1$
\begin{eqnarray*}
\sigma_i&=&(2-n)(\lambda-m-n+r+2)(\lambda-n-m+4)-(2-n)q_i\\&=&
(2-n)[(\lambda-m-n+r+2)(\lambda-n-m+4)-q_i].
\end{eqnarray*}
 Then
\begin{eqnarray*}
|B|&=&(2-n)^n[(\lambda-2n-2m+2)(\lambda-n-m+r+4)+8m]\\&&\prod\limits_{i=1}^{n-1}
[(\lambda-m-n+r+2)(\lambda-n-m+4)-q_i],
\end{eqnarray*}
 and thus the result follows.
\end{proof}

Similarly, we have the following theorem.

\begin{theo}
\begin{eqnarray*}
 f(\lambda,
G^{+1-})&=&[(\lambda-m-r)(\lambda-n-2m+4)+(4-n)m-2r]\\&&(\lambda-n-m+4)^{m-n}\prod\limits_{i=1}^{n-1}[(\lambda-n-m+4)(\lambda-m+r-q_i)-q_i],
\\
f(\lambda,
G^{-1-})&=&[(\lambda-n-2m+4)(\lambda-2n-m+3r+2)+(4-n)m-2r]\\&&(\lambda-n-m+4)^{m-n}\prod\limits_{i=1}^{n-1}[(\lambda-n-m+4)(\lambda-m-n+r+q_i+2)-q_i].
\end{eqnarray*}
\end{theo}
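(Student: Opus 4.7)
The plan is to follow, almost verbatim, the argument used above for $G^{11-}$; I will describe the $G^{+1-}$ case, and the $G^{-1-}$ case is then obtained by replacing the top-left block of $A(G^{+1-})$ by $J_n-I_n-A$ (reflecting $G^-=G^c$) and adjusting the $V(G)$-side degree from $m$ to $n+m-2r-1$.

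First I would write down the block form of $\lambda I_{n+m}-Q(G^{+1-})$. Since the subgraph on $V(G)$ is $G$, the subgraph on $E(G)$ is $K_m$, and the bipartite part is $B^c(G)$, each vertex of $G$ has degree $r+(m-r)=m$, while each edge has degree $(m-1)+(n-2)=m+n-3$. This yields
\[
\lambda I_{n+m}-Q(G^{+1-})=\left(\begin{array}{cc}(\lambda-m)I_n-A & R-J_{nm}\\ R^{\top}-J_{mn} & (\lambda-m-n+4)I_m-J_m\end{array}\right).
\]
As in the $G^{11-}$ proof, the obstacle is the $-J_m$ in the bottom-right block. I would scale the second block row by $(2-n)$ (carrying $(2-n)^{-m}$ out front) and then add $J_{mn}$ times the first block row to the scaled second row. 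Using $J_{mn}R=2J_m$, $J_{mn}J_{nm}=nJ_m$, and $J_{mn}A=rJ_{mn}$, the contribution $(2-n)J_m$ produced by this operation exactly cancels the $-(2-n)J_m$ present after scaling, leaving the bottom-right block equal to $(2-n)(\lambda-m-n+4)I_m$.

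Assuming $\lambda\ne m+n-4$, I would then apply Lemma \ref{l2}. Substituting $RR^{\top}=Q$, $RJ_{mn}=rJ_n$, $J_{nm}J_{mn}=mJ_n$, and $A=Q-rI_n$ into the resulting Schur complement and cancelling the $(2-n)^{\pm m}$ factors, the problem reduces to $\det B$, where $B$ takes the form
\[
B=(\lambda-m-n+4)(\lambda-m+r)I_n-(\lambda-m-n+5)Q+c(\lambda)J_n
\]
for an explicit scalar $c(\lambda)$ collecting the rank-one contributions. By Lemma \ref{th3}(1), the eigenvalues of $B$ are $P(q_i,0)=(\lambda-m-n+4)(\lambda-m+r)-(\lambda-m-n+5)q_i=(\lambda-n-m+4)(\lambda-m+r-q_i)-q_i$ for $i=1,\dots,n-1$ (immediate rearrangement), together with one exceptional eigenvalue $P(2r,n)$.

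The main obstacle is verifying that $P(2r,n)$ collapses to the advertised $(\lambda-m-r)(\lambda-n-2m+4)+(4-n)m-2r$. This relies on $rn=2m$ and, crucially, on the identity $(r-m)n/(2-n)=m$, which converts the $1/(2-n)$ denominators introduced by the row scaling into polynomials in $\lambda$ with integer coefficients; the remaining algebra is a routine collection of $\lambda^2$-, $\lambda$-, and constant-terms. For $G^{-1-}$, the same scheme applies: replacing $A$ by $J_n-I_n-A$ and the diagonal $\lambda-m$ by $\lambda-n-m+2r+2$ shifts each $q_i$-factor to $(\lambda-n-m+4)(\lambda-m-n+r+q_i+2)-q_i$ and produces the modified scalar $(\lambda-n-2m+4)(\lambda-2n-m+3r+2)+(4-n)m-2r$.
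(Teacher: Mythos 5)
Your proposal is correct and is essentially the proof the paper intends: this theorem is one of the ``Similarly'' cases following the $G^{11-}$ argument, and your outline (scale the second block row by $2-n$, add $J_{mn}$ times the first row to kill the $-J_m$ block, apply Lemma \ref{l2} and then Lemma \ref{th3}) reproduces that template exactly, with the identity $(r-m)n/(2-n)=m$ correctly clearing the denominators so that the exceptional eigenvalue collapses to $(\lambda-m-r)(\lambda-n-2m+4)+(4-n)m-2r$. I checked the final eigenvalue computations for both $G^{+1-}$ and $G^{-1-}$ and they match the stated formulas.
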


\begin{theo}
\begin{eqnarray*}
f(\lambda,
G^{1+-})&=&[(\lambda-2n-m+r+2)(\lambda-n-4r+6)+(4-n)m-2r]\\&&(\lambda-n-2r+6)^{m-n}\prod\limits_{i=1}^{n-1}[(\lambda-n-m+r+2)(\lambda-n-2r+6-q_i)-q_i].
\end{eqnarray*}
\end{theo}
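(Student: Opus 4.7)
The proof parallels those above, in particular the $G^{11-}$ argument. First, write
\[ A(G^{1+-})=\begin{pmatrix} J_n-I_n & J_{nm}-R \\ J_{mn}-R^{\top} & R^{\top}R-2I_m \end{pmatrix},\quad D(G^{1+-})=\begin{pmatrix}(n+m-r-1)I_n & 0\\ 0 & (n+2r-4)I_m\end{pmatrix}, \]
so that $f(\lambda,G^{1+-})=|M|$, where
\[ M=\begin{pmatrix}(\lambda-n-m+r+2)I_n-J_n & R-J_{nm}\\ R^{\top}-J_{mn} & (\lambda-n-2r+6)I_m-R^{\top}R\end{pmatrix}. \]

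The plan is to perform a block row operation that turns the bottom-right block into a scalar multiple of $I_m$ and then apply Lemma \ref{l2}. To avoid fractions, first scale the second block row of $M$ by $n-2$ (this multiplies the determinant by $(n-2)^m$), and then multiply the first block row by $X:=(n-2)R^{\top}-2J_{mn}$ and add the result to the scaled second block row. Using $R^{\top}J_{nm}=2J_m$, $J_{mn}R=2J_m$, $J_{mn}J_{nm}=nJ_m$, $R^{\top}J_n=2J_{mn}$, and $J_{mn}J_n=nJ_{mn}$, a direct check gives $X(R-J_{nm})=(n-2)R^{\top}R$, so the new bottom-right is $(n-2)(\lambda-n-2r+6)I_m$, while the new bottom-left becomes $(n-2)(\lambda-n-m+r+3)R^{\top}-(2\lambda-n-2m+2r-2)J_{mn}$.

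Assuming $\lambda\ne n+2r-6$, Lemma \ref{l2} yields $|M|=(\lambda-n-2r+6)^m\det(B_{\mathrm{Schur}})$. Setting $\tilde B:=(\lambda-n-2r+6)B_{\mathrm{Schur}}$ and simplifying with $RR^{\top}=Q$, $J_{nm}R^{\top}=RJ_{mn}=rJ_n$, and $J_{nm}J_{mn}=mJ_n$, the apparent $(n-2)$ in the denominator of the $J_n$-coefficient cancels via the regularity identity $m-r=r(n-2)/2$ (equivalent to $rn=2m$), leaving
\[ \tilde B=(\lambda-n-2r+6)(\lambda-n-m+r+2)I_n-(\lambda-n-m+r+3)Q+(n-m-\lambda+6r-6)J_n, \]
so that $|M|=(\lambda-n-2r+6)^{m-n}\det(\tilde B)$.

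Finally, apply Lemma \ref{th3}(1) to $\tilde B$, viewed as a polynomial in $Q$ and $J_n$. For $i=1,\dots,n-1$ the eigenvalue is $(\lambda-n-m+r+2)(\lambda-n-2r+6-q_i)-q_i$, and the remaining eigenvalue (substituting $Q\mapsto 2r$, $J_n\mapsto n$, and using $rn=2m$) simplifies to $(\lambda-2n-m+r+2)(\lambda-n-4r+6)+(4-n)m-2r$. Taking the product and combining with the $(\lambda-n-2r+6)^{m-n}$ prefactor recovers the claimed formula. The main obstacle is selecting $X$ correctly: its coefficient of $J_{mn}$ must be tuned so that the $J_m$ contributions from $R^{\top}J_{nm}$, $J_{mn}R$, and $J_{mn}J_{nm}$ cancel exactly in the new bottom-right block, and then $rn=2m$ must be invoked to guarantee that the final expression for $\tilde B$ is polynomial (not rational) in $n$.
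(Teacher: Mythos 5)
Your proposal is correct and follows essentially the same route as the paper: reduce the bottom-right block to a scalar multiple of $I_m$ by a (scaled) block row operation, apply Lemma \ref{l2}, and read off the eigenvalues of the resulting matrix in $Q$ and $J_n$ via Lemma \ref{th3}. Your only deviation is cosmetic — you merge the paper's three successive operations (add $R^{\top}\times$ row 1, rescale by $2-n$, add $2J_{mn}\times$ row 1) into the single multiplier $X=(n-2)R^{\top}-2J_{mn}$, and you make explicit the $rn=2m$ cancellation that the paper leaves implicit; all the stated identities and the final eigenvalue computations check out.
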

\begin{proof} Obviously,
\[
A(G^{1+-})= \left(
\begin{array}{cccc}
J_n-I_n &  J_{nm}-R\\
J_{mn}-R^{\top} & A(G^l)
\end{array}
\right)=\left(
\begin{array}{cccc}
J_n-I_n &  J_{nm}-R\\
J_{mn}-R^{\top} & R^{\top}R-2I_m
\end{array}
\right)
 \]  and   \[ D(G^{1+-})= \left(
\begin{array}{cccc}
(n+m-r-1)I_n  &  0\\
0 & (n+2r-4)I_m
\end{array}
\right).
\]
Then
\begin{eqnarray*}
f(\lambda, G^{1+-})&=& \left|
\begin{array}{cccc}
(\lambda-n-m+r+2)I_n-J_n  &  R-J_{nm}\\
    R^{\top}-J_{mn}     & (\lambda-n-2r+6)I_m-R^{\top}R
\end{array}
\right|. \end{eqnarray*}

Let  \begin{eqnarray*} M= \left(
\begin{array}{cccc}
(\lambda-n-m+r+2)I_n-J_n  &  R-J_{nm}\\
    R^{\top}-J_{mn}     & (\lambda-n-2r+6)I_m-R^{\top}R
\end{array}
\right), \end{eqnarray*} then $f(\lambda, G^{1+-})=|M|$. Obviously,
$R^{\top}J_{nm}=2J_m$. Thus multiplying the first row of the block
matrix $M$ by $R^{\top}$ and adding the result to the second row of
$M$, we obtain a new matrix
\begin{eqnarray*}
  M'= \left(
\begin{array}{cccc}
(\lambda-n-m+r+2)I_n-J_n  &  R-J_{nm}\\
   (\lambda-n-m+r+3) R^{\top}-3J_{mn}     & (\lambda-n-2r+6)I_m-2J_m
\end{array}
\right).\end{eqnarray*}
 Let
 \begin{eqnarray*}
 M''= \left(
\begin{array}{cccc}
(\lambda-n-m+r+2)I_n-J_n  &  R-J_{nm}\\
  (2-n)(\lambda-n-m+r+3) R^{\top}\\-3(2-n)J_{mn}     &
  (2-n)(\lambda-n-2r+6)I_m-2(2-n)J_m
\end{array}
\right).\end{eqnarray*} Clearly, $|M''|=(2-n)^{m}|M'|=(2-n)^{m}|M|$
and $f(\lambda, G^{11-})=|M'|=(2-n)^{-m}|M''|$. Obviously,
$J_{mn}R=2J_m$ and $J_{mn}J_{nm}=nJ_m$. Thus multiplying the first
row of the block matrix $M''$ by $2J_{mn}$ and adding the result to
the second row of $M''$, we obtain a new matrix
 \begin{eqnarray*}
  M'''= \left(
\begin{array}{cccc}
(\lambda-n-m+r+2)I_n-J_n  &  R-J_{nm}\\
   (2-n)(\lambda-n-m+r+3) R^{\top}\\+(2\lambda-n-2m+2r-2)J_{mn}     & (2-n)(\lambda-n-2r+6)I_m
\end{array}
\right).\end{eqnarray*}
 Clearly, $|M'''|=|M''|$, $f(\lambda,
G^{11-})=(2-n)^{-m}|M'''|$, and it is sufficient to prove our claim
for $\lambda\neq n+2r-6$. By Lemma \ref{l2},
\[|M'''|=(2-n)^{m-n}(\lambda-n-2r+6)^{m-n}|B|,\]
 where
\begin{eqnarray*}
B&=&(2-n)(\lambda-n-2r+6)(\lambda-n-m+r+2)I_n-(2-n)(\lambda-n-2r+6)J_n\\
&&-(R-J_{nm})[(2-n)(\lambda-n-m+r+3)
R^{\top}+(2\lambda-n-2m+2r-2)J_{mn}]\\
&=&(2-n)(\lambda-n-2r+6)(\lambda-n-m+r+2)I_n\\
&&-(2-n)(\lambda-n-m+r+3)RR^{\top}\\
&&-(2-n)(\lambda-n-2r+6)J_n-
(4n-8+\lambda n-n^2-nm+nr)rJ_n\\
&&+(2\lambda-n-2m+2r-2)mJ_n.
\end{eqnarray*}
Thus \[f(\lambda, G^{11-})=(2-n)^{-n}(\lambda-n-2r+6)^{m-n}|B|.\] By
Lemma \ref{th3}, the eigenvalues of $B$ are
\begin{eqnarray*}
\sigma_n&=&(2-n)(\lambda-n-2r+6)(\lambda-n-m+r+2)-(2-n)(\lambda-n-m+r+3)\cdot
2r\\&&-(2-n)(\lambda-n-2r+6)n- (4n-8+\lambda
n-n^2-nm+nr)rn\\&+&(2\lambda-n-2m+2r-2)mn
\\&=&(2-n)[(\lambda-2n-m+r+2)(\lambda-n-4r+6)+(4-n)m-2r],
\end{eqnarray*}
 and for $i=1,2,\dots, n-1$,
\begin{eqnarray*}
\sigma_i&=&(2-n)(\lambda-n-2r+6)(\lambda-n-m+r+2)-(2-n)(\lambda-n-m+r+3)q_i\\&=&
(2-n)[(\lambda-n-m+r+2)(\lambda-n-2r+6-q_i)-q_i].
\end{eqnarray*}
 Then  \begin{eqnarray*} |B|&=&(2-n)^n[(\lambda-2n-m+r+2)(\lambda-n-4r+6)+(4-n)m-2r]\\&&\prod\limits_{i=1}^{n-1}
[(\lambda-n-m+r+2)(\lambda-n-2r+6-q_i)-q_i],
\end{eqnarray*}
 and thus the result follows.
\end{proof}

Similarly, we have the following theorem.

\begin{theo}
\begin{eqnarray*}
 f(\lambda,
G^{++-})&=&[(\lambda-m-r)(\lambda-n-4r+6)+(4-n)m-2r](\lambda-n-2r+6)^{m-n}\\&&\prod\limits_{i=1}^{n-1}[(\lambda-m+r-q_i)(\lambda-n-2r+6-q_i)-q_i],
\\
f(\lambda,
G^{-+-})&=&[(\lambda-n-4r+6)(\lambda-2n-m+3r+2)+(4-n)m-2r]\\&&(\lambda-n-2r+6)^{m-n}\\
&&\prod\limits_{i=1}^{n-1}[(\lambda-m-n+r+2+q_i)(\lambda-n-2r+6-q_i)-q_i].
\end{eqnarray*}
\end{theo}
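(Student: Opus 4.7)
I will derive both formulas in parallel, following the template of the preceding $G^{1+-}$ proof applied to $\lambda I_{n+m}-Q(G^{xyz})$ with $x\in\{+,-\}$. In $G^{++-}$ a vertex has degree $r+(m-r)=m$ and an edge has degree $(2r-2)+(n-2)=n+2r-4$, so
\[
A(G^{++-})=\begin{pmatrix}A & J_{nm}-R\\ J_{mn}-R^{\top} & R^{\top}R-2I_m\end{pmatrix},\quad D(G^{++-})=\begin{pmatrix}mI_n & 0\\ 0 & (n+2r-4)I_m\end{pmatrix}.
\]
For $G^{-+-}$, the top-left adjacency block becomes $J_n-I_n-A$ and the vertex-degree becomes $n+m-2r-1$, with the other blocks unchanged. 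Thus $f(\lambda,G^{++-})=|M|$ where
\[
M=\begin{pmatrix}(\lambda-m)I_n-A & R-J_{nm}\\ R^{\top}-J_{mn} & (\lambda-n-2r+6)I_m-R^{\top}R\end{pmatrix},
\]
and the corresponding matrix for $G^{-+-}$ has top-left block $(\lambda-n-m+2r+2)I_n-J_n+A$.

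The core step is to triangularize $M$ by adding $T$ times the top block-row to the bottom, where $T$ is chosen so that the new $(2,2)$-block is a scalar multiple of $I_m$. Using $R^{\top}J_{nm}=J_{mn}R=2J_m$ and $J_{mn}J_{nm}=nJ_m$, the choice $T=R^{\top}-\frac{2}{n-2}J_{mn}$ makes $T(R-J_{nm})=R^{\top}R$ exactly, so the updated $(2,2)$-block collapses to $(\lambda-n-2r+6)I_m$. Applying Lemma \ref{l2} (for $\lambda\ne n+2r-6$) then yields $f(\lambda,G^{xyz})=(\lambda-n-2r+6)^{m-n}|B|$ for an $n\times n$ Schur-complement matrix $B=M_{11}-\frac{1}{\lambda-n-2r+6}M_{12}M_{21}'$, where $M_{21}'$ denotes the new lower-left block.

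Next I use the identities $RR^{\top}=Q$, $RJ_{mn}=J_{nm}R^{\top}=rJ_n$, $J_{nm}J_{mn}=mJ_n$, and $A=Q-rI_n$ to rewrite $B$ as a polynomial $P(Q,J_n)$ in $Q$ and $J_n$. Lemma \ref{th3}(1) then gives the spectrum of $B$: the $(n-1)$ values $P(q_i,0)$ supply, after clearing one factor of $\lambda-n-2r+6$, the factors $(\lambda-m+r-q_i)(\lambda-n-2r+6-q_i)-q_i$ (respectively $(\lambda-m-n+r+2+q_i)(\lambda-n-2r+6-q_i)-q_i$ for $G^{-+-}$), while the evaluation $P(2r,n)$ together with $rn=2m$ yields the leading quadratic $(\lambda-m-r)(\lambda-n-4r+6)+(4-n)m-2r$ (respectively $(\lambda-n-4r+6)(\lambda-2n-m+3r+2)+(4-n)m-2r$). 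Reassembling the factors produces the stated formulas.

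The principal obstacle is bookkeeping rather than insight: for $x=-$, the $-J_n$ term in $M_{11}$ combined with the fractional $\tfrac{2}{n-2}$ in $T$ forces one to track several $J_n$-valued summands in $M_{12}M_{21}'$ and then verify, via $rn=2m$, that they collapse to the compact quadratic advertised. If one prefers to avoid fractional coefficients, the alternative two-step elimination used in the $G^{1+-}$ proof (first multiply by $R^{\top}$, then introduce a $(2-n)$-scaling and clean up with a $J_{mn}$-operation) is equally available and leads to the same answer.
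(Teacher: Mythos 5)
Your argument is correct and is essentially the paper's own route: the paper proves only the $G^{1+-}$ case in detail and disposes of this theorem with ``similarly,'' and your Schur-complement computation via Lemma~\ref{l2} followed by Lemma~\ref{th3} is exactly that adaptation --- I checked that your block matrices, the identity $T(R-J_{nm})=R^{\top}R$ (which uses $rn=2m$), and the resulting eigenvalues $\sigma_n$ and $\sigma_i$ reproduce both stated formulas. Two small remarks: your one-step elimination with $T=R^{\top}-\tfrac{2}{n-2}J_{mn}$ is actually cleaner than the paper's two-step $(2-n)$-scaled elimination, but as written your normalization is off by $(\lambda-n-2r+6)^{n}$ --- with $B$ the literal Schur complement the prefactor is $(\lambda-n-2r+6)^{m}$, so you should (as the paper does in the analogous proofs) take $B$ to be $(\lambda-n-2r+6)$ times the Schur complement to obtain the stated $(\lambda-n-2r+6)^{m-n}|B|$.
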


\begin{theo}
\begin{eqnarray*}
f(\lambda,
G^{1--})&=&[(\lambda-n-2m+4r)(\lambda-2n-m+r+2)+(4-n)m-2r]\\&&(\lambda-n-m+2r)^{m-n}\\
&&\prod\limits_{i=1}^{n-1}[(\lambda-n-m+r+2)(\lambda-n-m+2r+q_i)-q_i].
\end{eqnarray*}
\end{theo}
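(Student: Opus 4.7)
The plan is to follow the strategy of the preceding $G^{1+-}$ and $G^{11-}$ proofs. First I would write down the block forms
\[
A(G^{1--}) = \begin{pmatrix} J_n - I_n & J_{nm} - R \\ J_{mn} - R^\top & J_m + I_m - R^\top R \end{pmatrix}, \qquad D(G^{1--}) = \begin{pmatrix} (n+m-r-1)I_n & 0 \\ 0 & (n+m-2r-1)I_m \end{pmatrix},
\]
using $A((G^l)^-) = J_m - I_m - A(G^l) = J_m + I_m - R^\top R$. Consequently $f(\lambda, G^{1--}) = |M|$ where
\[
M = \begin{pmatrix} (\lambda - n - m + r + 2)I_n - J_n & R - J_{nm} \\ R^\top - J_{mn} & (\lambda - m - n + 2r)I_m - J_m + R^\top R \end{pmatrix}.
\]

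Next I would perform two row operations to reduce the $(2,2)$ block to a scalar multiple of $I_m$, at which point Lemma~\ref{l2} applies cleanly. The critical move is to add $-R^\top$ times the first block row to the second: using $R^\top(R - J_{nm}) = R^\top R - 2J_m$, the $R^\top R$ terms cancel and the $(2,2)$ block becomes $(\lambda - m - n + 2r)I_m + J_m$, while the $(2,1)$ block reduces to $-(\lambda - n - m + r + 1)R^\top + J_{mn}$. To avoid fractions, I would then scale the second block row by $(2-n)$ (accumulating a factor of $(2-n)^m$ in the determinant) and add $-J_{mn}$ times the first row to the second. The identities $J_{mn}R = 2J_m$ and $J_{mn}J_{nm} = nJ_m$ give $J_{mn}(R - J_{nm}) = (2-n)J_m$, which cancels the residual $J_m$ and leaves the $(2,2)$ block equal to $(2-n)(\lambda - m - n + 2r)I_m$, invertible for $\lambda \neq m + n - 2r$.

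Applying Lemma~\ref{l2} and dividing out the $(2-n)^m$ scaling yields $f(\lambda, G^{1--}) = (\lambda - m - n + 2r)^m |B|$, where, after using $RR^\top = Q$, $J_{nm}R^\top = rJ_n$, and $J_{nm}J_{mn} = mJ_n$, the Schur complement takes the form $B = \alpha I_n + \beta Q + \gamma J_n$ with $\alpha = \lambda - n - m + r + 2$, $\beta = (\lambda - n - m + r + 1)/(\lambda - m - n + 2r)$, and an explicit (messy) $\gamma$ containing a $(2-n)^{-1}$ factor. By Lemma~\ref{th3}(1), the eigenvalues of $B$ are $\sigma_i = \alpha + \beta q_i$ for $1 \le i \le n-1$ and an exceptional $\sigma_n = \alpha + 2r\beta + n\gamma$. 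For $i < n$, clearing the common denominator immediately gives $(\lambda - m - n + 2r)\sigma_i = (\lambda - n - m + r + 2)(\lambda - n - m + 2r + q_i) - q_i$, which is the claimed product factor; the $n$ accumulated denominators cancel against the prefactor to reduce $(\lambda - m - n + 2r)^m$ down to $(\lambda - m - n + 2r)^{m-n}$.

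The main obstacle will be verifying that $(\lambda - m - n + 2r)\sigma_n$ equals the advertised quadratic $(\lambda - n - 2m + 4r)(\lambda - 2n - m + r + 2) + (4-n)m - 2r$. This reduces to a polynomial identity in $\lambda, m, n, r$: the $\lambda^2$- and $\lambda$-coefficients cancel by direct expansion, but matching the constant term requires the regularity identity $2m = nr$, which I would apply in the form $n(r-m) = m(2-n)$ to eliminate the stray $(2-n)$ in $\gamma$ and force the two expressions to coincide. Assembling everything then yields the stated formula.
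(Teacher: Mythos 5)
Your proposal is correct and follows essentially the same route as the paper's own proof: the same block forms, the same two row operations (first adding $-R^{\top}$ times the first block row, then rescaling the second block row by $(2-n)$ and adding $-J_{mn}$ times the first row) to reduce the $(2,2)$ block to a multiple of $I_m$, followed by Lemma~\ref{l2} and Lemma~\ref{th3}. The only cosmetic difference is that you keep the Schur complement with rational-function coefficients and clear denominators at the end, whereas the paper clears the factor $(2-n)(\lambda-n-m+2r)$ from $B$ up front; your exceptional-eigenvalue identity does check out, with the caveat that the regularity relation $2m=nr$ is needed to match the $\lambda$-coefficient as well as the constant term.
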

\begin{proof} Obviously,
\[
A(G^{1--})= \left(
\begin{array}{cccc}
J_n-I_n &  J_{nm}-R\\
J_{mn}-R^{\top} & J_m-I_m-A(G^l)
\end{array}
\right)=\left(
\begin{array}{cccc}
J_n-I_n &  J_{nm}-R\\
J_{mn}-R^{\top} & J_m+I_m-R^{\top}R
\end{array}
\right)
 \]  and   \[ D(G^{1--})= \left(
\begin{array}{cccc}
(n+m-r-1)I_n  &  0\\
0 & (n+m-2r-1)I_m
\end{array}
\right).
\]
Then
\begin{eqnarray*}
f(\lambda, G^{1--})&=& \left|
\begin{array}{cccc}
(\lambda-n-m+r+2)I_n-J_n  &  R-J_{nm}\\
    R^{\top}-J_{mn}     & (\lambda-n-m+2r)I_m-J_m+R^{\top}R
\end{array}
\right|. \end{eqnarray*} Let \begin{eqnarray*}
 M= \left(
\begin{array}{cccc}
(\lambda-n-m+r+2)I_n-J_n  &  R-J_{nm}\\
    R^{\top}-J_{mn}     & (\lambda-n-m+2r)I_m-J_m+R^{\top}R
\end{array}
\right). \end{eqnarray*} Then $f(\lambda, G^{1--})=|M|$. Note that
$R^{\top}J_{nm}=2J_m$ . Hence multiplying the first row of the block
matrix $M$ by $-R^{\top}$ and adding the result to the second row of
$M$, we obtain a new matrix
 \begin{eqnarray*}
  M_1= \left(
\begin{array}{cccc}
(\lambda-n-m+r+2)I_n-J_n  &  R-J_{nm}\\
   (n+m-\lambda-r-1) R^{\top}+J_{mn}     & (\lambda-n-m+2r)I_m+J_m
\end{array}
\right).
\end{eqnarray*}
Let \[
 M_2= \left(
\begin{array}{cccc}
(\lambda-n-m+r+2)I_n-J_n  &  R-J_{nm}\\
   (2-n)(n+m-\lambda-r-1) R^{\top}\\+(2-n)J_{mn}
   &(2-n)(\lambda-n-m+2r)I_m+(2-n)J_m
\end{array}
\right).\] Clearly, $|M_2|=(2-n)^{-m}|M_1|=(2-n)^{-m}|M|$ and
$f(\lambda, G^{1--})=|M_1|=(2-n)^{-m}|M_2|$. Obviously,
$J_{mn}R=2J_m$ and $J_{mn}J_{nm}=nJ_m$. Thus multiplying the first
row of the block matrix $M''$ by $-J_{mn}$ and adding the result to
the second row of $M_2$, we obtain a new matrix
 \[
  M_3= \left(
\begin{array}{cccc}
(\lambda-n-m+r+2)I_n-J_n  &  R-J_{nm}\\
   (2-n)(n+m-\lambda-r-1) R^{\top}+(n+m-\lambda-r)J_{mn}     &(2-n)(\lambda-n-m+2r)I_m
\end{array}
\right).\] Clearly, $|M_3|=|M_2|$,  $f(\lambda,
G^{1--})=(2-n)^{-m}|M_3|$, and it is sufficient to prove our claim
for $\lambda\neq n+m-2r$. By Lemma \ref{l2},
\[|M_3|=(2-n)^{m-n}(\lambda-n-m+2r)^{m-n}|B|,\]where
\begin{eqnarray*}
B&=&(2-n)(\lambda-n-m+2r)(\lambda-n-m+r+2)I_n-(2-n)(\lambda-n-m+2r)J_n
\\&&-(R-J_{nm})((2-n)(n+m-\lambda-r-1) R^{\top}+(n+m-\lambda-r)J_{mn})\\&=&(2-n)(\lambda-n-m+2r)(\lambda-n-m+r+2)I_n-(2-n)(n+m-\lambda-r-1)Q
\\&&-(2-n)(\lambda-n-m+2r)J_n+(m+n\lambda+nr+2n-\lambda-r-n^2-nm-2)rJ_n\\&&+(n+m-\lambda-r)mJ_n.
\end{eqnarray*}
Thus  \[f(\lambda, G^{1--})=(2-n)^{-n}(\lambda-n-m+2r)^{m-n}|B|.\]
 By Lemma \ref{th3}, the eigenvalues of
$B$ are
\begin{eqnarray*}
\sigma_n&=&(2-n)(\lambda-n-m+2r)(\lambda-n-m+r+2)-(2-n)(n+m-\lambda-r-1)2r\\&&-(2-n)(\lambda-n-m+2r)n
+(m+n\lambda+nr+2n-\lambda-r-n^2-nm-2)rn\\&&+(n+m-\lambda-r)mn
\\&=&(2-n)[(\lambda-n-2m+4r)(\lambda-2n-m+r+2)+(4-n)m-2r],
\end{eqnarray*}
 and
\begin{eqnarray*}
\sigma_i&=&(2-n)(\lambda-n-m+2r)(\lambda-n-m+r+2)-(2-n)(n+m-\lambda-r-1)q_i\\&=&
(2-n)[(\lambda-n-m+r+2)(\lambda-n-m+2r+q_i)-q_i] \end{eqnarray*}
for $i=1,2,\dots, n-1$.
Then
 \begin{eqnarray*}
|B|&=&(2-n)^n[(\lambda-n-2m+4r)(\lambda-2n-m+r+2)+(4-n)m-2r]\\
&&\prod\limits_{i=1}^{n-1}[(\lambda-n-m+r+2)(\lambda-n-m+2r+q_i)-q_i],
\end{eqnarray*}
 and thus the result follows.
 \end{proof}

Similarly, we have the following theorem.

\begin{theo}
\begin{eqnarray*}
f(\lambda,
G^{+--})&=&[(\lambda-m-r)(\lambda-n-2m+4r)+(4-n)m-2r]\\&&(\lambda-n-m+2r)^{m-n}\\
&&\prod\limits_{i=1}^{n-1}[(\lambda-n-m+2r+q_i)(\lambda+r-m-q_i)-q_i].
\end{eqnarray*}
\end{theo}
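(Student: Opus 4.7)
The plan is to mirror the argument already used for $G^{1--}$, with minor adjustments to handle the fact that the top-left block now involves $A$ rather than $J_n$. First, I compute $A(G^{+--})$ as the block matrix whose diagonal blocks are $A$ and $J_m + I_m - R^\top R$ (the latter obtained from $J_m - I_m - A(G^l)$ together with $A(G^l) = R^\top R - 2I_m$), and whose off-diagonal blocks are $J_{nm}-R$ and $J_{mn}-R^\top$. The vertex degrees in $G^{+--}$ are $m$ for each vertex in $V(G)$ (with contributions $r$ from $G^+$ and $m-r$ from $B^c(G)$) and $m+n-2r-1$ for each edge-vertex, so $D(G^{+--})$ splits as $mI_n \oplus (m+n-2r-1)I_m$. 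Assembling $\lambda I_{n+m}-Q(G^{+--})$ yields a block matrix $M$ with top-left $(\lambda-m)I_n-A$, bottom-right $(\lambda-m-n+2r)I_m-J_m+R^\top R$, and off-diagonal blocks $R-J_{nm}$ and $R^\top-J_{mn}$.

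Next I perform three block row operations to reduce the bottom-right block to a scalar multiple of $I_m$. Multiplying the first block row by $-R^\top$ and adding to the second cancels $R^\top R$ in the bottom-right (using $R^\top J_{nm}=2J_m$), leaving $(\lambda-m-n+2r)I_m+J_m$. Scaling the resulting second block row by $(2-n)$ multiplies the determinant by $(2-n)^m$. Finally, adding $-J_{mn}$ times the first block row to the second removes the residual $J_m$ (via $J_{mn}R=2J_m$, $J_{mn}J_{nm}=nJ_m$), leaving $(2-n)(\lambda-m-n+2r)I_m$ in the bottom-right, while the new bottom-left becomes $(2-n)(m-\lambda+1)R^\top+(2-n)R^\top A+(n+m+r-\lambda-2)J_{mn}$ (using $J_{mn}A=rJ_{mn}$).

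Applying Lemma \ref{l2} with $C=(2-n)(\lambda-m-n+2r)I_m$ then produces $f(\lambda,G^{+--})=(2-n)^{-n}(\lambda-m-n+2r)^{m-n}|B|$, where $B=(2-n)(\lambda-m-n+2r)[(\lambda-m)I_n-A]-(R-J_{nm})\cdot(\text{new bottom-left})$. Using $RR^\top=Q$, $J_{nm}R^\top=rJ_n$, $RJ_{mn}=rJ_n$, $J_{nm}J_{mn}=mJ_n$, $J_nA=rJ_n$, and $A=Q-rI$, I expand $B$ as a polynomial $P(Q,J_n)$ with nonzero terms in $Q^2$, $Q$, $I$, and $J_n$; the $Q^2$ term appears because $T_2R^\top A=Q^2-rQ-r^2J_n$. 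Lemma \ref{th3} then gives the eigenvalues $\sigma_n=P(2r,n)$ and $\sigma_i=P(q_i,0)$ for $i=1,\dots,n-1$.

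Finally I simplify the eigenvalues. The expressions $\sigma_i$ for $i\le n-1$ reorganize routinely into $(2-n)[(\lambda-n-m+2r+q_i)(\lambda+r-m-q_i)-q_i]$, since the combination $(\lambda-m-n+2r)(\lambda-m+r)+(n-r-1)q_i-q_i^2$ factors as that product minus $q_i$. The hard part is $\sigma_n$: the $J_n$-contribution to $B$, when multiplied by $n$, is not visibly divisible by $(2-n)$. The regularity identity $rn=2m$ must be invoked repeatedly (in forms such as $nr\cdot r=2mr$, $n\lambda r=2m\lambda$, and $2nr=4m$) to collapse the residual cross-terms; after this bookkeeping, $\sigma_n$ reduces to $(2-n)[(\lambda-m-r)(\lambda-n-2m+4r)+(4-n)m-2r]$. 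Multiplying this $\sigma_n$ by $\prod_{i=1}^{n-1}\sigma_i$ produces a factor $(2-n)^n$ that cancels the $(2-n)^{-n}$ prefactor, and the stated formula follows.
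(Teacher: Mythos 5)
Your proposal is correct and follows exactly the route the paper intends: the paper derives $G^{+--}$ by the same three block-row operations (add $-R^{\top}$ times the first row, scale by $2-n$, add $-J_{mn}$ times the first row) used in its explicit proof of $G^{1--}$, with the top-left block $(\lambda-m)I_n-A$ introducing the $Q^2$ term via $(R-J_{nm})R^{\top}A=Q^2-rQ-r^2J_n$, just as you describe, and your $\sigma_n$ and $\sigma_i$ check out (note that the divisibility of the $J_n$-contribution by $2-n$ follows at once from $n(m-r)=m(n-2)$). Only cosmetic issue: the symbol $T_2$ is never defined (it should be $R-J_{nm}$).
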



\begin{theo}
\begin{eqnarray*}
f(\lambda,
G^{---})&=&(\lambda-2n-2m+4r+2)(\lambda+3r-n-m)(\lambda+2r-n-m)^{m-n}\\&&\prod\limits_{i=1}^{n-1}[(\lambda-n-m+r+q_i+2)(\lambda+2r-n-m+q_i)-q_i].
\end{eqnarray*}
\end{theo}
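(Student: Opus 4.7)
The plan is to follow the same blueprint used for $G^{1--}$, but taking into account the additional term $A$ that appears in the top-left block because the vertex part of $G^{---}$ is now $G^c$ rather than $G^1$. The proofs of $G^{+--}$ and $G^{---}$ are both "similar" to $G^{1--}$ for exactly this reason.

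First I would write $A(G^{---})$ in block form: the top-left is $J_n-I_n-A$ (from $G^c$), the bottom-right is $J_m+I_m-R^{\top}R$ (using $A((G^l)^c)=J_m-I_m-A(G^l)=J_m+I_m-R^{\top}R$), and the off-diagonal blocks are $J_{nm}-R$ and $J_{mn}-R^{\top}$. Each vertex in $V(G)$ and each edge-vertex in $E(G)$ has degree $n+m-2r-1$ in $G^{---}$, so $D(G^{---})=(n+m-2r-1)I_{n+m}$. This gives a clean block form for $M=\lambda I_{n+m}-Q(G^{---})$.

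Next, I would mimic the row reductions from the $G^{1--}$ proof. Multiplying the top block row by $-R^{\top}$ and adding to the bottom uses $R^{\top}J_{nm}=2J_{mn}$ and $R^{\top}R$-cancellation to simplify the bottom-right block to $(\lambda-n-m+2r)I_m+J_m$, while the bottom-left block now contains an $R^{\top}A$ term (this is the new ingredient compared to $G^{1--}$). I then scale the bottom block row by $(2-n)$, contributing a factor $(2-n)^m$, and multiply the top row by $-J_{mn}$ and add to the bottom; using $J_{mn}R=2J_m$, $J_{mn}J_{nm}=nJ_m$, and the new identity $J_{mn}A=rJ_{mn}$, the $J_m$ term in the bottom-right cancels and that block becomes $(2-n)(\lambda-n-m+2r)I_m$. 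Invoking Lemma \ref{l2} with this (invertible, for generic $\lambda$) bottom-right block reduces the computation to the determinant of an $n\times n$ Schur complement $B$, after extracting a factor of $(\lambda+2r-n-m)^{m-n}$.

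To evaluate $|B|$, I would simplify using $RR^{\top}=Q$, $RJ_{mn}=rJ_n$, $J_{nm}R^{\top}=rJ_n$, $J_{nm}J_{mn}=mJ_n$, $J_nA=rJ_n$, and $A=Q-rI$. After multiplying through by $\mu:=\lambda-n-m+2r$ to clear denominators, $\mu B$ becomes a polynomial in $Q$ and $J_n$ of the form $\mu(\mu+2-r)I_n+(2\mu+1-r)Q+Q^2-\alpha J_n$ for some scalar $\alpha$. Applying Lemma \ref{th3}(1), the eigenvalues indexed by $q_i$ (for $i=1,\dots,n-1$) are $\mu^2+(2-r)\mu+(2\mu+1-r)q_i+q_i^2=(\mu+q_i+2-r)(\mu+q_i)-q_i$, which upon substituting back $\mu=\lambda+2r-n-m$ gives exactly the claimed $(\lambda-n-m+r+q_i+2)(\lambda+2r-n-m+q_i)-q_i$.

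The one remaining eigenvalue, corresponding to $P(2r,n)$ in Lemma \ref{th3}(1), is where the serious bookkeeping lives and will be the main obstacle. Here the scalar $\alpha$ is of the form $1+C_J/((2-n)\mu)$ with $C_J$ involving both $r(\mu+r+1)$ and $(m-r)(\mu+r)$ pieces; the cleanup uses the regularity identity $2m=rn$ to turn $\frac{n(m-r)}{2-n}$ into $-\frac{nr}{2}=-m$, and after collecting terms this special eigenvalue must collapse to $(\mu+r)(\mu-n-m+2r+2)=(\lambda+3r-n-m)(\lambda-2n-2m+4r+2)$. This algebraic collapse is the one place where a sign or $J_n$-coefficient error would ruin the advertised factorization, so I would verify it carefully. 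Combined with the extracted factor $(\lambda+2r-n-m)^{m-n}$ and a $\mu^{-n}$ correction (since we scaled $B$ to $\mu B$), the result matches the statement.
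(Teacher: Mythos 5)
Your computation is correct, but it takes a genuinely different route from the paper. The paper's proof is a two-line reduction: it observes that $G^{---}$ is the complement of $G^{+++}$, that $G^{+++}$ is $2r$-regular, and then applies the complement identity of Lemma \ref{th2} to the already-established formula for $f(\lambda,G^{+++})$ in Theorem \ref{th4} (substituting $n+m$ for $n$, $4r$ for $2r$, and $n+m-2-\lambda$ for $\lambda$). You instead carry out the direct block-determinant computation modeled on the $G^{1--}$ case, and I verified the key steps: $G^{---}$ is indeed $(n+m-2r-1)$-regular; the row reductions do leave the bottom-right block as $(2-n)(\lambda-n-m+2r)I_m$; the cleared Schur complement does take the form $\mu(\mu+2-r)I_n+(2\mu+1-r)Q+Q^2-\alpha J_n$ with $\mu=\lambda-n-m+2r$, whose generic eigenvalues factor as $(\mu+q_i+2-r)(\mu+q_i)-q_i$; and the special eigenvalue does collapse to $(\mu+r)(\mu-n-m+2r+2)$ after using $nr=2m$ (one typo: $R^{\top}J_{nm}$ equals $2J_m$, not $2J_{mn}$, and your description of $\alpha$ as ``$1+C_J/((2-n)\mu)$'' refers to the coefficient in $B$ rather than $\mu B$, but these do not affect the argument). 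The trade-off is clear: the paper's complement trick is essentially free but leans on two prior results and only works because $G^{+++}$ happens to be regular and already computed; your direct computation is longer and requires the careful bookkeeping you flag, but it is self-contained, independently confirms Theorem \ref{th4} via Lemma \ref{th2}, and exercises exactly the same machinery as the other $z=-$ cases, so it serves as a useful consistency check on the paper's formula.
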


\begin{proof} Note that  $G^{---}$ is the complement of  $G^{+++}$, and $G^{+++}$ is $2r$-regular.
By  Lemma \ref{th2} and Theorem \ref{th4},
\begin{eqnarray*}
&&f(\lambda,
G^{---})\\
&=&(-1)^{m+n-1}\frac{\lambda-2m-2n+2+4r}{n+m-2-\lambda-4r}(n+m-2-\lambda-4r)(n+m-\lambda-3r)\\&&(m+n-\lambda-2r)^{m-n}
\prod\limits_{i=1}^{n-1}[(n+m-\lambda-r-q_i-2)(n+m-\lambda-2r-q_i)-q_i]\\
&=&(\lambda-2n-2m+4r+2)(\lambda+3r-n-m)(\lambda+2r-n-m)^{m-n}\\&&\prod\limits_{i=1}^{n-1}[(\lambda-n-m+r+q_i+2)(\lambda+2r-n-m+q_i)-q_i],
\end{eqnarray*}
as desired.
\end{proof}

\vspace{4mm}

\noindent {\bf Acknowledgement.} This work was supported by the Research Fund for the Doctoral Program of Higher Education of China (No.~20124407110002).

\end{document}